\newcommand{\R}{\mathbb{R}}
\newcommand{\X}{\mathbb {X}}
\def\be{\begin{equation}}
\def\ee{\end{equation}}
\def\wh{\widehat}
\def\G{\Gamma}
\def\g{\gamma}
\def\p{\partial}
\def\mc{\mathcal}
\def\dsl{\displaystyle}
\def\ep{\epsilon}
\def\nb{\nabla}
\def\de{\delta}
\def\mR{\mathcal R}
\def\mP{\mathcal P}
\def\w{\wedge}
\def\ll{\lambda}
\newtheorem{theo}{Theorem}[section] 
\newtheorem{prop}[theo]{Proposition}
\theoremstyle{definition}
\newtheorem{exam}{Example}[section]
\newcommand{\fp}{\frak p}
\newcommand{\fc}{\frak c}
\begin{document}
\title{ Conserved quantities of distinguished curves on conformal sphere
}

\author{Prim Plansangkate and Lenka Zalabová}

\address{PP: Division of Computational Science, Faculty of Science, Prince of Songkla University, Hat Yai, Songkhla, 90110 Thailand; LZ: Institute of Mathematics, 
Faculty of Science, University of South Bohemia, Brani\v sovsk\' a 1760, 370 05 \v Cesk\' e Bud\v ejovice, and Department of Mathematics and Statistics, 
 	Faculty of Science, Masaryk University,	Kotl\'{a}\v{r}sk\'{a} 2, 611 37 Brno, 
Czech Republic
}
\email{prim.p@psu.ac.th, lzalabova@gmail.com} 

\date{}

\keywords{conserved quantities; distinguished curves; conformal tractors; conformal Mercator equation; conformal round sphere;}

\maketitle

\vskip 50pt

\begin{abstract} 
We give conserved quantities of two generalizations of conformal circles on the conformal sphere.  One generalization concerns curves carrying a distinguished parallel tractor along them, which can be used to construct the conserved quantities.  The other is the class of curves satisfying the conformal Mercator equation, the conserved quantities of which are computed using Lagrangian formalism.  The two generalizations are not disjoint, and our main result is the relation between their conserved quantities.  Explicit examples are also presented.

\end{abstract}


\section{Introduction}

By distinguished curves on a conformal manifold, we mean a class of curves that is invariant under conformal transformations. A well-known example of these is the class of conformal circles (or conformal geodesics), whose definition could be traced back to Yano \cite{Y57} who defined them as solutions of a third order ODE.  Other equivalent definitions in terms of the defining ODEs are given in \cite{BE90, T12}.  There are also extensive studies of conformal circles based on tractor formulation, which can be found, for example, in \cite{BEG94, SZ19, EZ22} and references therein. 

This paper focuses on distinguished curves on the conformal sphere, which is the round $n$-sphere with the conformal class of the standard round metric.
The conformal sphere is a fundamental example of conformal manifolds.  In particular, it admits maximum conformal symmetries and is a homogeneous manifold.  Conformal circles on the conformal sphere are very well known, in fact they are all round circles.  
Another example of distinguished classes of curves is the family of conformal loxodromes, recently defined in \cite{E23}.  
Our work here concentrates on two generalizations of conformal circles, and the computation of their conserved quantities.
Recall that the conserved quantities for conformal circles have previously been studied, for example, in \cite{T12, SZ19, DT22, GST21}.  In \cite{T12, DT22}, conserved quantities were constructed from conformal Killing-Yano $2$-forms and then used to prove complete integrability of the conformal circle equation on gravitational instantons.  On the other hand, in \cite{SZ19} tractor method was used to construct conserved quantities from conformal Killing vector fields.

Our first generalization follows \cite{GST21}, where authors used tractor calculus to construct conserved quantities of conformal circles. Explicitly, they constructed a series of canonical tractors defined for each curve and showed that the wedge of the first three of them is parallel along the curve only if the curve is a conformal circle. Then pairing with suitable BGG solutions gives conserved quantities of conformal circles. We focus in Section \ref{par-trac} on curves such that the wedge of the first four canonical tractors is parallel. On the conformal round sphere, it is then a direct generalization to find conserved quantities for such curves by pairing with suitable parallel tractors.

The second generalization is based on a Lagrangian formulation for the conformal circle  equation.  It was shown in \cite{DK21} that the equation, on an arbitrary conformal manifold, can be derived from a conformally invariant $3$rd order Lagrangian, provided that one considers a special class of variations.\footnote{Variational formulation of conformal circle equation has also recently been studied in \cite{M24, KMS24}, where it is shown that unparametrised conformal circles in $3$ dimensions is precisely the Euler-Lagrange equation of the total torsion functional.}  The Euler-Lagrange equation of this Lagrangian, on the other hand, is of $4$th order, and called the conformal Mercator equation in \cite{DK21}.  In fact, the equation also arises from the $2$nd order Lagrangian discussed in \cite{BE90}, which differs from the $3$rd order Lagrangian of \cite{DK21} by a total derivative term.  It was noted \cite{BE90, DK21} that on a conformally-flat manifold such as the conformal sphere, its solutions cover conformal circles.  Thus, we shall consider parametrized curves which are solutions of the conformal Mercator equation as another class of distinguished curves generalizing the conformal circles.\footnote{Relation between solutions of the conformal Mercator equation and conformal loxodromes, as defined in \cite{E23}, has been noted in \cite{E23}.}
As the associated Lagrangian is conformally invariant, we are able to use Noether's principle to derive conserved quantities for the solution curves from conformal Killing vector fields.  This result is given in Section \ref{sec:firstintegralF}.

Via an explicit example, it is evident that the two generalizations we consider are not disjoint, that is, they share some common curves on which the two sets of our conserved quantites are both constant. This motivates us to find a relation between the generally different quantities.  Using the Hamiltonian formulation which simplifies the expressions of the quantities, we establish such a relation.  This is our main result and is given in Section \ref{sec:Hamiltonian}.


\section{Invariants via parallel tractors} \label{par-trac}
We use here tractor calculus to study parallel tractors and conserved quantities of curves on the conformal sphere. One can see e.g. \cite{CG17} for an introduction to conformal tractors. 
\subsection{Conformal sphere and tractors}
By a conformal sphere we mean the sphere $S^n$ with the conformal class of metrics represented by the standard round metric. It is a homogeneous space and in fact, maximally symmetric conformal Riemannian structure of dimension $n$. The Lie group of conformal transformations of $S^n$ is isomorphic to $O(n+1,1),$ with the corresponding Lie algebra $\frak{so}(n+1,1).$ 
 This follows from the realization of $S^n$ as the projectivization of the cone of non-zero null-vectors in the pseudo-Euclidean space $\R^{n+1,1}$ of signature $(n+1,1)$.
The group $O(n+1,1)$ acts transitively on $S^n$ and the stabilizer of a point is the Poincar\'e subgroup $P\subset O(n+1,1)$, so $S^n\cong O(n+1,1)/P$.
We write elements of $\frak{so}(n+1,1)$ as $(1,n,1)$-block matrices
$$
\left( \begin{smallmatrix} -a&2S&0 \\ T &-R&-2S^t \\ 0&-T^t&a \end{smallmatrix} \right)
$$
for $a \in \R$, $R \in \frak{so}(n)$ and $T,S^t \in \R^n$. 
Viewed as a conformal Killing field, we get
$$(T^i-{R^i}_j x^j+ax^i-2S_jx^jx^i+x_jx^jS^i)\frac{\partial}{\partial x^i}.$$
The Lie algebra $\frak{p} \subset \frak{so}(n+1,1)$ corresponds to the upper triangular matrices
and there  is a natural complement $\fc$ of $\frak{p}$ in $\frak{so}(n+1,1)$ such that $\frak{so}(n+1,1)=\fc \oplus \fp$, i.e.  
$$
\fp \ni \left( \begin{smallmatrix} -a&2S&0 \\ 0 &-R&-2S^t \\ 0&0&a \end{smallmatrix} \right), \ \ 
\fc \ni \left( \begin{smallmatrix} 0&0&0 \\ T &0&0 \\ 0&-T^t&0 \end{smallmatrix} \right).
$$
The complement gives natural local exponential coordinates on $S^n$ around the origin $o=eP$, which correspond to the usual coordinates coming from the projectivization of the cone in $\R^{n+1,1}$. Indeed, we get
$$
\exp \left( \begin{smallmatrix} 0&0&0 \\ T &0&0 \\ 0&-T^t&0 \end{smallmatrix} \right) \left( \begin{smallmatrix} 1 \\ 0 \\ 0 \end{smallmatrix} \right) = \left( \begin{smallmatrix} 1&0&0 \\ T &E&0 \\ -\frac12\langle T,T \rangle &-T^t&1 \end{smallmatrix} \right)  \left( \begin{smallmatrix} 1 \\ 0 \\ 0 \end{smallmatrix} \right) = \left( \begin{smallmatrix} 1 \\ T\\ -\frac12 \langle T,T \rangle \end{smallmatrix} \right),
$$
where $\langle\: , \: \rangle$ is the standard scalar product on $\R^n$.

We employ the tractor point of view in Section \ref{par-trac}, and we can follow \cite{GZ} due to the homogeneity of the conformal sphere. We consider the standard tractor bundle $\mathcal{T}$ modeled on $\R^{n+2}$ with the tractor metric, which is the standard pseudometric of signature $(n+1,1)$. Thus, we deal with the standard representation $$\rho: \frak{so}(n+1,1) \to \frak{gl}(\R^{n+2}),$$ 
and we employ the description of the tractors in the above exponential coordinates just by restriction to $\fc$. 
Indeed, the standard tractor bundle $\mathcal{T}$ trivializes as $\mathcal{T} \simeq \fc \times \R^{n+2}$ and we can see the standard tractors as sections of $\fc \times \R^{n+2}$, that is, functions $\fc \to \R^{n+2}$. 
Then, written in this local trivialization, the corresponding functions take the form
\begin{gather} \label{exp}
\exp(J) \mapsto \exp(-\rho(J))(w) {\rm \ \ for \ \ } J=\left( \begin{smallmatrix} 0&0&0 \\ X &0&0 \\ 0&-X^t&0 \end{smallmatrix} \right)
\end{gather}
for some neighborhood $X$ of $0$ in $\fc$ and $w=(w^a)$ are coordinates in the representation space $\R^{n+2}$.  Viewing $\rho(J)$ as a linear transformation, we get
\begin{gather} \label{action2}
e_0 \mapsto \sum_{i=1}^n X^ie_i, \ \ e_i \mapsto -X^ie_{N}, \ \  e_N \mapsto 0,
\end{gather}
where $X=(X^i)$, $i=1, \dots, n$ and $N:=n+1$.
Thus in coordinates $w$ we get
$$
\rho(J)(w)=w^0 \sum_{i=1}^n X^ie_i+\sum_{i=1}^n w^i X^i e_N.
$$
 One can then compute directly
 \begin{gather} \label{expw}
 w-\rho(J)(w)+\frac12  \rho^2(J)(w),
 \end{gather}
 and the series stops as $\rho^3(J)$ vanishes. 
The same ideas apply analogously to arbitrary tractor bundles just by tensoriality; we are particularly interested in $\wedge^4\R^{n+2}$. 

Let us finally say that the standard tractor connection decomposes into the fundamental derivative $D$ and the algebraic action and in the case of the conformal sphere takes the form 
$\nabla^\mathcal{T}=D+\rho$, restricted to $\fc$ to get a coordinate description. We then get the tractor connection on an arbitrary tensor bundle analogously by taking the corresponding representation of $\frak{so}(n+1,1)$.

\subsection{Curves and tractors}  \label{sec:curvetractor}
We consider an arbitrary curve $\gamma=(\gamma^i(t))$, $i=1, \dots, n$, and we denote 
$$U=(\frac{d}{dt}\gamma(t)), \ \ A=(\frac{d^2}{dt^2}\gamma(t)), \ \ A'=(\frac{d^3}{dt^3}\gamma(t)), \dots$$
its velocity vector field, acceleration vector field, and so on for subsequent derivatives, where $\,{}'$ denotes $\frac{d}{dt}.$
We denote by $u$ the length of the velocity vector field
$u=\sqrt{\langle U,U \rangle}$
for the standard scalar product on $\R^n$.

For each curve $\gamma$, we define a canonical series of derived standard tractors by means of the tractor derivative. Explicitly, the tractor derivative takes the form  
$
\nabla^\mathcal{T}(\mathbb{X})=D(\mathbb{X})+ K\cdot \mathbb{X},
$
for the tractor $\mathbb{X}$, where 
$$K:=\left( \begin{smallmatrix} 0&0&0 \\ U&0&0 \\ 0&-U&0 \end{smallmatrix} \right).$$
We write these tractors in coordinates in the standard basis $e_a,$ $a=0, \dots, N,$ of $\R^{n+2}$ that we decompose (for the canonical metric from the conformal class) as 
$$ \R^{1+n+1}=\langle e_0 \rangle + \langle e_1, \dots, e_n \rangle + \langle e_{N} \rangle.
$$
We start with the canonical inclusion $\mathbb{T}= u^{-1}e_0$.
 This leads to the following sequence of standard tractors, where we use Einstein summation notation and $i=1,\dots,n$
\begin{gather*}
\mathbb{U}=-u^{-3}\langle U,A \rangle e_0+
u^{-1} U^ie_i
\\
\mathbb{A}= (3u^{-5}\langle U,A \rangle^2-u^{-3}(\langle A,A \rangle +\langle U,A'\rangle ) )e_0+
(-2u^{-3} \langle U,A \rangle U^i+u^{-1}A^i)e_i-u e_{N}
\\
\mathbb{A}'= (-15u^{-7}\langle U,A \rangle^3+9u^{-5}\langle U,A \rangle(\langle A,A \rangle +\langle U,A'\rangle)-3u^{-3}\langle A,A' \rangle -u^{-3}\langle U,A'' \rangle )e_0
\\ +
(9u^{-5}\langle U,A \rangle^2U^i-3u^{-3}(\langle A,A \rangle +\langle U,A'\rangle)U^i-3u^{-3}\langle U,A \rangle A^i+u^{-1}(A')^i)e_i
\\
\dots
\end{gather*}

The best known and studied class of conformal curves are conformal circles, \cite{EZ22,GST21,SZ19}. In the language of tractors, a curve is a conformal circle if and only if tractors $\mathbb{T}, \mathbb{U}, \mathbb{A}, \mathbb{A'}$
are linearly dependent, i.e. $\mathbb{A'}=\langle \mathbb{T}, \mathbb{U}, \mathbb{A} \rangle$. This leads to the condition that $T_3=\mathbb{T} \wedge \mathbb{U} \wedge \mathbb{A}$ is parallel along $\gamma$ for the tractor connection, \cite{GST21}. 
For each (parametrized) curve, thus for each sequence of tractors $\mathbb{T}, \mathbb{U},  \mathbb{A}, \mathbb{A'}, \dots $ of arbitrary length $\ell$,
we can compute the following Gramm matrix of order $\ell$ 
 $$
M_\ell=\left( \begin{smallmatrix}
  0&0&-1&0&\alpha_1 & \cdots\\
   0&1&0&-\alpha_1&-\frac32 \alpha'_1
 & \cdots\\
   -1&0&\alpha_1&\frac12\alpha'_1&\frac12 \alpha''_1-\alpha_2& \cdots\\
  0&-\alpha_1&\frac12 \alpha'_1&\alpha_2&\frac12 \alpha'_2& \cdots\\
   \alpha_1&-\frac32 \alpha'_1&\frac12 \alpha''_1-\alpha_2&\frac12 \alpha'_2&\alpha_3 & \cdots \\
   \vdots &  \vdots & \vdots & \vdots & \vdots & \ddots
  \end{smallmatrix} \right)
$$
for the standard tractor metric.
Then we can use the Cramer rule for $M_4$ to write the tractor combination explicitly. Direct computation  gives 
$$ \mathbb{A'}=-\alpha_1\mathbb{U}-\frac12 \alpha'_1 \mathbb{T},$$
where $\alpha_1$ is the square of the length of the acceleration tractor  $\mathbb{A}$. Then the tractor derivative gives
$$
\nabla^{\wedge^3 \mathcal{T}}(\mathbb{T} \wedge \mathbb{U} \wedge \mathbb{A})=\mathbb{U} \wedge \mathbb{U} \wedge \mathbb{A}+\mathbb{T} \wedge \mathbb{A} \wedge \mathbb{A}+\mathbb{T} \wedge \mathbb{U} \wedge (-\alpha_1\mathbb{U}-\frac12 \alpha'_1 \mathbb{T})=0.
$$ 
This also allows us to define a series of natural relative invariants $\Delta_\ell=\det(M_\ell)$, and it follows that the curve is a conformal circle if and only if $\Delta_4=0$.
Let us note that $\Delta_1=\Delta_2=0$ and $\Delta_3=-1$, so the case of conformal circles is the first interesting one.

In fact, it is proved in \cite[Section 2.3.]{SZ19} that in general,  vanishing of $\Delta_\ell$ is equivalent to the fact, that the tractors $\mathbb{T}, \mathbb{U},\mathbb{A}, \mathbb{A'}, \dots, \mathbb{A}^{\ell - 3}$ are linearly dependent. 
Let us now generalize to the case of $\Delta_5$. Thus we assume $\Delta_4 \neq 0$ and $\Delta_5=0$, which means that the tractors $\mathbb{T}, \mathbb{U},  \mathbb{A},  \mathbb{A}', \mathbb{A}''$ are linearly dependent, i.e. $ \mathbb{A}'' \in \langle \mathbb{T}, \mathbb{U},  \mathbb{A},  \mathbb{A}'\rangle$. Using $M_5$ and the Cramer rule, we find the tractor combination explicitly as
\begin{align} \begin{split}
\label{tract-comb-5}
\mathbb{A}''&=\frac{\Delta'_4}{2\Delta_4}\mathbb{A}'-\alpha_1\mathbb{A}+\frac{1}{2\Delta_4}(\alpha'_1(2\alpha_2-\Delta_4)-\alpha_1\alpha'_2)\mathbb{U}\\ &+
\frac{1}{4\Delta_4}(2\alpha_1(\alpha'_1)^2-4\Delta_4^2-2\alpha''_2\Delta_4-\alpha'_1\alpha'_2)\mathbb{T}.
\end{split}
\end{align}
 We are interested in analogues of the $3$-tractor $T_3$. 
Let us derive the expression $ \Delta_4^a \cdot \mathbb{T} \wedge \mathbb{U} \wedge \mathbb{A} \wedge \mathbb{A'}$  with respect to tractor connection along the curve, where we consider a general power of $\Delta_4$ due to the possible effect of the coefficient with $\mathbb{A}'$. We get 
\begin{align}
\begin{split}
\label{derivative-4-tractor}
&\nabla^{\wedge^4 \mathcal{T}} (\Delta_4^a \cdot \mathbb{T} \wedge \mathbb{U} \wedge \mathbb{A} \wedge \mathbb{A'})=
a\Delta_4^{a-1} \Delta_4' \cdot \mathbb{T} \wedge \mathbb{U} \wedge \mathbb{A} \wedge \mathbb{A'}+ \Delta_4^a \cdot \mathbb{U} \wedge \mathbb{U} \wedge \mathbb{A} \wedge \mathbb{A'}\\
&+\Delta_4^a \cdot \mathbb{T} \wedge \mathbb{A} \wedge \mathbb{A} \wedge \mathbb{A'} +
\Delta_4^a \cdot\mathbb{T} \wedge \mathbb{U} \wedge \mathbb{A'} \wedge \mathbb{A'}  + 
\Delta_4^a \cdot \mathbb{T} \wedge \mathbb{U} \wedge \mathbb{A} \wedge (\frac{\Delta'_4}{2\Delta_4}\mathbb{A'} +\dots) \\
&=(a\Delta_4^{a-1}\Delta'_4+ \Delta_4^a \cdot \frac{\Delta'_4}{2\Delta_4})\cdot \mathbb{T} \wedge \mathbb{U} \wedge \mathbb{A} \wedge \mathbb{A'}.
\end{split}
\end{align}
Thus we get the condition 
$$
a\Delta_4^{a-1}\Delta'_4+ \Delta_4^a \cdot \frac{\Delta'_4}{2\Delta_4}=0
$$
to get a parallel object. We get $a=-\frac12$ or $\Delta'_4=0$. 
\begin{prop}
The curve $\gamma(t)$ satisfies $\Delta_4 \neq 0$ and $\Delta_5=0$ if and only if
 \begin{gather}
 \label{general-tractor}
 (-\Delta_4)^{-\frac12} \cdot \mathbb{T} \wedge \mathbb{U} \wedge \mathbb{A} \wedge \mathbb{A'}
 \end{gather}
  is parallel to the tractor connection along the curve. Moreover, if $\Delta_4$ is constant, then already the expression 
  \begin{gather} 
 \label{general-tractor2}
 T_4:= \mathbb{T} \wedge \mathbb{U} \wedge \mathbb{A} \wedge \mathbb{A'}
 \end{gather}
 is parallel to the tractor connection.
  \end{prop}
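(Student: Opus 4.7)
The plan is to combine the computation already carried out in (\ref{derivative-4-tractor}) with a direct expansion of $\nabla^{\wedge^4 \mathcal{T}}(T_4)$ that does \emph{not} assume $\Delta_5=0$, then extract the ``moreover'' clause as the special case $a=0$.

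For the ``if'' direction, assume $\Delta_4 \neq 0$ and $\Delta_5=0$. The linear dependence provides (\ref{tract-comb-5}), and since the $\mathbb{T}$-, $\mathbb{U}$-, and $\mathbb{A}$-components of $\mathbb{A}''$ all drop by antisymmetry when wedged with $\mathbb{T} \wedge \mathbb{U} \wedge \mathbb{A}$, only the $\mathbb{A}'$-term with coefficient $\frac{\Delta_4'}{2\Delta_4}$ contributes in (\ref{derivative-4-tractor}). Substituting $a=-\frac12$ there makes both summands cancel, showing that $(-\Delta_4)^{-1/2} T_4$ is parallel along $\gamma$. The choice of base $-\Delta_4$ rather than $\Delta_4$ is simply the usual convention to keep the square root real, since the Gram matrix is of Lorentzian type and generically $\Delta_4<0$; this sign is cosmetic for the cancellation, as $(-1)^{-1/2}$ is constant and does not affect parallelism.

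For the ``only if'' direction, note first that $\Delta_4 \neq 0$ is necessary for $(-\Delta_4)^{-1/2} T_4$ to be defined. Without assuming $\Delta_5=0$, expand $\nabla^{\wedge^4 \mathcal{T}}(T_4)$ by Leibniz using the defining relations $\nabla^{\mathcal{T}} \mathbb{T} = \mathbb{U}$, $\nabla^{\mathcal{T}} \mathbb{U} = \mathbb{A}$, $\nabla^{\mathcal{T}} \mathbb{A} = \mathbb{A}'$, $\nabla^{\mathcal{T}} \mathbb{A}' = \mathbb{A}''$; three of the four resulting terms contain a repeated wedge factor and vanish, leaving $\nabla^{\wedge^4 \mathcal{T}}(T_4) = \mathbb{T} \wedge \mathbb{U} \wedge \mathbb{A} \wedge \mathbb{A}''$. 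Applying Leibniz once more and setting $\nabla^{\wedge^4 \mathcal{T}}((-\Delta_4)^{-1/2} T_4)=0$ then reduces to
\begin{equation*}
\mathbb{T} \wedge \mathbb{U} \wedge \mathbb{A} \wedge \mathbb{A}'' \;=\; \frac{\Delta_4'}{2\Delta_4}\,\mathbb{T} \wedge \mathbb{U} \wedge \mathbb{A} \wedge \mathbb{A}'.
\end{equation*}
Because $\Delta_4 \neq 0$ ensures that $\mathbb{T},\mathbb{U},\mathbb{A},\mathbb{A}'$ are linearly independent, this forces $\mathbb{A}''-\frac{\Delta_4'}{2\Delta_4}\mathbb{A}' \in \langle \mathbb{T},\mathbb{U},\mathbb{A}\rangle$ and in particular $\mathbb{A}'' \in \langle \mathbb{T},\mathbb{U},\mathbb{A},\mathbb{A}'\rangle$. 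By the linear-dependence criterion from \cite[Section 2.3]{SZ19} recalled just above the proposition, this is precisely $\Delta_5=0$.

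The ``moreover'' clause is the case $a=0$ of (\ref{derivative-4-tractor}): under $\Delta_4 \neq 0$ and $\Delta_5=0$ it reads $\nabla^{\wedge^4 \mathcal{T}}(T_4) = \frac{\Delta_4'}{2\Delta_4} T_4$, which vanishes whenever $\Delta_4$ is constant, so $T_4$ is then itself parallel. The only substantive point to verify is purely bookkeeping: among the four components of $\mathbb{A}''$ in (\ref{tract-comb-5}), only the $\mathbb{A}'$-coefficient $\frac{\Delta_4'}{2\Delta_4}$ survives the wedge with $\mathbb{T}\wedge\mathbb{U}\wedge\mathbb{A}$. This is transparent from (\ref{tract-comb-5}) and is already folded into the ``$+\dots$'' in (\ref{derivative-4-tractor}), so no real obstacle arises beyond careful tracking of the constant of integration $a$.
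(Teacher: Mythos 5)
Your proof is correct and takes essentially the same route as the paper's: the forward direction via the computation \eqref{derivative-4-tractor} with $a=-\frac12$ applied to the combination \eqref{tract-comb-5}, the converse by expanding the covariant derivative and concluding that $\mathbb{A}''-\frac{\Delta'_4}{2\Delta_4}\mathbb{A}'$ lies in $\langle\mathbb{T},\mathbb{U},\mathbb{A}\rangle$ (hence $\Delta_5=0$), and the final clause as the $\Delta'_4=0$ specialization. The only difference is that you spell out the Leibniz expansion in the converse a little more explicitly than the paper does.
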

\begin{proof}
The above observations show that the conditions $\Delta_4 \neq 0$ and $\Delta_5=0$ are equivalent to the linear dependency of the tractors $\mathbb{T},  \mathbb{U},\mathbb{A},\mathbb{A'}$ and $\mathbb{A''}$ and the explicit combination takes the form \eqref{tract-comb-5}. Note that $\Delta_4$ is non-positive, \cite[Lemma 2.3.]{SZ19}.
The computation \eqref{derivative-4-tractor}  then gives that \eqref{general-tractor} is parallel with respect to the tractor connection. Conversely, assuming \eqref{general-tractor} is parallel, we get
$$\nabla^{\wedge^4 \mathcal{T}}((-\Delta_4)^{-\frac12} \cdot \mathbb{T} \wedge \mathbb{U} \wedge \mathbb{A} \wedge \mathbb{A'})=(-\Delta_4^{-\frac12})\cdot \mathbb{T} \wedge \mathbb{U} \wedge \mathbb{A} \wedge (-\frac{\Delta'_4}{2\Delta_4} \mathbb{A}'+\mathbb{A}'')=0$$
and this implies, that $\mathbb{A}'$ and $\mathbb{A}''$ are dependent modulo $\langle \mathbb{T},\mathbb{U},\mathbb{A} \rangle$. If $\Delta_4$ is a non-zero constant, then $\Delta'_4=0$ and $T_4$ is parallel from \eqref{derivative-4-tractor}.
\end{proof}  
Since $\Delta_4$ is a relative invariant, the condition that $\Delta_4$ is a non-zero constant gives a special class of (parametrized) curves. In particular, we can normalize the constant to $\Delta_4=-1$, which corresponds to the conformal arc-length parametrization.

\subsection{Conserved quantities and tractors} \label{sectract}
In \cite{GST21}, authors give a general method of how to use conformal Killing-Yano $2$-forms and the parallel $3$-tractor $\mathbb{T} \wedge \mathbb{U} \wedge \mathbb{A}$ to find conserved quantities of conformal circles on arbitrary conformal manifolds. The situation on $S^n$ is simpler; let us outline here the situation. For interest, we give the resulting quantities on $S^n$ in Appendix \ref{A1}.

On the conformal sphere, each conformal Killing-Yano $2$-form corresponds to an element of the tractor bundle for the representation $\rho_3:\frak{so}(n+1,1) \to \frak{gl}(\wedge^3\R^{n+2})$. 
It follows from the tractor theory that each such tractor is parallel with respect to the tractor connection. Indeed, in the case of $S^n$, the whole tractor space corresponds to solutions of the corresponding BGG operators, and they are all normal, i.e. coming from sections parallel to the tractor connection.
 Then pairing each such parallel tractor with the $3$-tractor corresponding to $\gamma$ gives a conserved quantity along the curve $\gamma$. By pairing, we mean the pairing given by the tractor metric on the standard tractor bundle that we represent in the standard basis $e_0, e_i, e_N$, $i=1,\dots, n$ by the block $(1,n,1)$-matrix
$$
\left( \begin{smallmatrix}
0&0&1 \\ 0& E &0 \\ 1 &0&0
 \end{smallmatrix} \right)
$$
for identity matrix $E$.
On the conformal sphere, this directly generalizes for curves satisfying $\Delta_4 \neq0, \Delta_5=0$.
Each conformal Killing-Yano $3$-tensor corresponds to an element of the tractor space $\wedge^4\R^{n+2}$. In particular, pairing with the $4$-tractor $ (-\Delta_4)^{-\frac12} \cdot \mathbb{T} \wedge \mathbb{U} \wedge \mathbb{A} \wedge \mathbb{A'}$ for $\gamma$ gives a conserved quantity along $\gamma$. If, moreover, $\Delta_4$ is constant, then pairing with $T_4$ already gives a conserved quantity.

Let us describe these quantities explicitly in the basis 
$e_{abcd} := e_a \wedge e_b \wedge e_c  \wedge e_d$
with $0 \leq a<b<c<d \leq N$
for the standard basis $e_a$, $a=0,\dots, N$. Firstly, we find the $4$-tractor $T_4 = \mathbb{T} \wedge \mathbb{U} \wedge \mathbb{A} \wedge \mathbb{A'}.$
 
 \begin{prop}
 The expression $T_4$ takes the explicit form
 \begin{gather} \label{4tractor}
T_4=u^{-4} U^{[i}A^j(A')^{k]}
e_{0ijk}
-3 u^{-4}\langle U,A \rangle U^{[i}A^{j]} \: 
e_{0ijN}+
u^{-2} U^{[i} (A')^{j]} \: 
e_{0ijN}
 \end{gather}
where we use Einstein summation convention,  $[ab..c]$ denotes the anti-symmetrization over indices in the bracket, $\langle\ , \ \rangle$ denotes the standard scalar product on $\R^n$ and $i,j,k=1, \dots, n$.
 \end{prop}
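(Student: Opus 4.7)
The plan is to compute the wedge product directly from the explicit expressions of $\mathbb{T}, \mathbb{U}, \mathbb{A}, \mathbb{A}'$ given in Section \ref{sec:curvetractor}, and to exploit the anti-symmetry of $\wedge$ to discard most of the terms before they ever get written down. The single key observation is that $\mathbb{T}=u^{-1}e_0$ is a pure $e_0$-tractor, so wedging with $\mathbb{T}$ annihilates every $e_0$-component of the other three factors. Thus I may replace $\mathbb{U}, \mathbb{A}, \mathbb{A}'$ at the outset by their projections onto $\langle e_1,\dots,e_n\rangle \oplus \langle e_N\rangle$. Inspecting the formulas in Section \ref{sec:curvetractor}, I see that $\mathbb{U}$ and $\mathbb{A}'$ have no $e_N$-component, while $\mathbb{A}$ contributes only the single term $-u\,e_N$ there. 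This splits $T_4$ into exactly two kinds of contributions: an ``all $e_i$'' piece sitting in $e_0\wedge e_i\wedge e_j\wedge e_k$, and a piece in which $\mathbb{A}$ supplies the $-u\,e_N$ factor and $\mathbb{U}, \mathbb{A}'$ supply their $e_i$-parts, sitting in $e_0\wedge e_i\wedge e_j\wedge e_N$.

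For the first piece, I would note that the $e_i$-part of $\mathbb{U}$ is $u^{-1}U^i e_i$, so in the wedge $\mathbb{U}\wedge\mathbb{A}$ the $U^j$-multiple inside $\mathbb{A}$'s $e_j$-component ($-2u^{-3}\langle U,A\rangle U^j e_j$) dies by anti-symmetry, and $\mathbb{A}$ effectively contributes only $u^{-1}A^j e_j$. Similarly, wedging further with $\mathbb{A}'$ kills both the $U^k$- and $A^k$-multiples inside $\mathbb{A}'$'s $e_k$-component (since $U^i e_i$ and $A^j e_j$ are already present), leaving only $u^{-1}(A')^k e_k$. Collecting the scalar prefactors yields $u^{-1}\cdot u^{-1}\cdot u^{-1}\cdot u^{-1}=u^{-4}$, so this piece produces exactly $u^{-4}U^{[i}A^j(A')^{k]} e_{0ijk}$.

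For the second piece, I would track it one factor at a time: $\mathbb{T}\wedge\mathbb{U}\wedge(-u\,e_N)=-u^{-1}U^i\, e_0\wedge e_i\wedge e_N$. Wedging this with $\mathbb{A}'$, the $U^j$-multiple in $\mathbb{A}'$ dies against $U^i e_i$, but the $A^j$-multiple $(-3u^{-3}\langle U,A\rangle A^j e_j)$ and the $(A')^j$-multiple $(u^{-1}(A')^j e_j)$ both survive. After swapping $e_N$ past $e_j$ (which flips a sign and cancels the overall minus from $-u\,e_N$) and combining the scalar factors, the two surviving contributions are precisely $-3u^{-4}\langle U,A\rangle U^{[i}A^{j]}e_{0ijN}$ and $u^{-2}U^{[i}(A')^{j]}e_{0ijN}$.

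The only real bookkeeping hurdle is keeping track of the sign generated by reordering $e_N$ past the $e_k$ or $e_j$ index, and making sure that the anti-symmetrization convention implicit in the notation $U^{[i}A^{j]}e_{0ijN}$ absorbs the correct combinatorial factor; once these are fixed, the three contributions assemble into the stated formula \eqref{4tractor}.
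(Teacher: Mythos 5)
Your proposal is correct and follows essentially the same route as the paper's proof: a direct expansion of the wedge product in which the $e_0$-components of $\mathbb{U},\mathbb{A},\mathbb{A}'$ are discarded against $\mathbb{T}$, the $U$- and $A$-proportional parts are discarded by anti-symmetry, and the surviving terms are reordered into the lexicographic basis $e_{0ijk}$, $e_{0ijN}$. The signs and prefactors you report for all three contributions agree with \eqref{4tractor}.
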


\begin{proof} 
One can see from the explicit forms of tractors $\mathbb{T}$, $\mathbb{U}$, $\mathbb{A}$ and $\mathbb{A'}$, that the term with $e_0$ comes only from $\mathbb{T}$ and then the remaining tractors can add $e_i, e_N$ only. The tractor $\mathbb{A}$ is the only tractor with $e_N$. Moreover, $U \wedge U$ and $A \wedge A$ vanish, so $T_4$ is as follows, where we use the Einstein summation notation and $i=1,\dots,n$
\begin{gather*}
u^{-1}e_0 \wedge u^{-1} U^ie_i \wedge \big( u^{-1}A^je_j-u e_{N}  \big) 
\wedge  \big( -3 u^{-3}\langle U,A \rangle A^ke_k+ u^{-1}(A')^ke_k\big)=
\\
u^{-1}e_0 \wedge u^{-1} U^ie_i \wedge  u^{-1}A^je_j  
\wedge  u^{-1}(A')^ke_k +
u^{-1}e_0 \wedge  u^{-1} U^ie_i \wedge -u e_{N} 
\wedge  -3 u^{-3}\langle U,A \rangle A^ke_k +
\\
u^{-1}e_0 \wedge u^{-1} U^ie_i \wedge -u e_{N}  
\wedge  u^{-1}(A')^ke_k=
\\
u^{-4}e_0 \wedge  U^ie_i \wedge  A^je_j  
\wedge  (A')^ke_k +
u^{-4}e_0 \wedge   U^ie_i \wedge  e_{N} 
\wedge 3 \langle U,A \rangle A^ke_k -
\\
u^{-2}e_0 \wedge   U^ie_i \wedge  e_{N}  
\wedge  (A')^ke_k.
\end{gather*}
The order of wedges does not respect the lexicographic ordering, so we have to reorder them to get a description in the basis $e_{abcd}.$ This leads to the reordering of coefficients, and summarizing them together leads exactly to the anti-symmetrization, so we get the formula for $T_4$ in the statement. 
\end{proof}

Let us now find the conserved quantities. We pair $T_4$ with the natural lexicographic basis of the tractor space $\wedge^4\R^{n+1}$, where the indices of the quantity reflect the corresponding basis element.  In Proposition \ref{prop_conservedqtractor} below, recall that the elementary alternating tensor of rank $m,$ $\ep^{i_1 \dots i_m},$ is defined by its action on $m$ vectors, $Y, \dots, Z,$ as
\[ \ep^{i_1 \dots i_m} (Y, \dots, Z) 
= 
\det \left(
\begin{array}{ccc}
Y^{i_1} & \dots &  Z^{i_1}  \\
\vdots & { } &  \vdots \\
Y^{i_m}& \dots &  Z^{i_m}
\end{array}
\right).
\]

\begin{prop} \label{prop_conservedqtractor}
The conserved quantities are as follows:
\begin{align}
 Q^{0ijN} &= 3u^{-4}\langle U,A\rangle \epsilon^{ij}(U,A)-u^{-2} \epsilon^{ij}(U,A') 
 +u^{-4} \epsilon^{ijl}(U,A,A') X_l \label{Q0ijN}
\\
\begin{split}
Q^{0ijk} &= 3u^{-4}\langle U,A \rangle \epsilon^{ijk}(X,U,A)-u^{-2} \epsilon^{ijk}(X,U,A')   \\
&+\frac{1}{2} |X|^2 u^{-4} \epsilon^{ijk}(U,A,A') +
 u^{-4} \ep^{ijkl}(X,U,A,A') X_l
\label{Q0ijk} 
\end{split}
\\
 Q^{ijkN}& = u^{-4}\epsilon^{ijk}(U,A,A')
\label{QijkN} \\
 Q^{ijkl} &= u^{-4}\epsilon^{ijkl}(X,U,A,A') \label{Qijkp}
\end{align}
\end{prop}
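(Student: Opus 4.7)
The plan is to realize each conserved quantity $Q^{abcd}$ as the tractor metric pairing of $T_4$ from \eqref{4tractor} with the parallel tractor $P_{abcd}$ in $\wedge^4\mathcal{T}$ whose value at the origin is the lexicographic basis element $e_{abcd}$, and to compute both factors in the exponential coordinates of Section~\ref{par-trac}. By \eqref{exp} applied to the induced representation $\rho_4:\frak{so}(n+1,1)\to\frak{gl}(\wedge^4\R^{n+2})$, the parallel tractor is given by
$$ P_{abcd}(X)=\exp(-\rho_4(J))(e_{abcd})=\sum_{k\geq 0}\frac{(-1)^k}{k!}\rho_4(J)^k(e_{abcd}), $$
where $J=J(X)$ is the element of $\fc$ determined by $X$. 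Since $\rho(J)$ is nilpotent of order three on $\R^{n+2}$ by \eqref{action2} and $\rho_4(J)$ acts as a derivation, the filtration degrees $\deg e_0=-1$, $\deg e_i=0$, $\deg e_N=+1$ bound the nilpotency of $\rho_4(J)$ on $\wedge^4\R^{n+2}$ by three, so the series terminates; explicitly, $\rho_4(J)$ replaces each $e_0$ by $\sum_m X^m e_m$, each middle $e_i$ by $-X^i e_N$, and annihilates $e_N$.

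The four basis types split naturally. For $e_{ijkN}$ every first-order term produces a repeated $e_N$, so $P_{ijkN}(X)=e_{ijkN}$ is constant. For $e_{0ijN}$ and $e_{ijkl}$ only the first-order correction survives. For $e_{0ijk}$ the second-order term $\tfrac12\rho_4(J)^2(e_{0ijk})$ is also nonzero; its unique contribution in which no $e_0$ remains comes from the composition $e_0\mapsto X^m e_m\mapsto -X^m X^m e_N$, producing exactly the $|X|^2$ coefficient that appears in \eqref{Q0ijk}. After reordering to lexicographic order, each $P_{abcd}$ has nonzero components only in the subspaces spanned by $e_{0mnp}$ and $e_{0mnN}$ -- the very subspaces that support $T_4$ by \eqref{4tractor}.

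Finally, the induced tractor metric on $\wedge^4\R^{n+2}$ pairs $e_0$ with $e_N$ and pairs middle $e_i$'s by $\delta_{ij}$, so the Gram-determinant formula reduces each elementary pairing to a $3\times 3$ or $4\times 4$ determinant of Kronecker deltas. Contracting these against the antisymmetrisations of $U$, $A$, $A'$ (and, when the relevant piece of $P_{abcd}$ is nonconstant, against $X$) yields precisely the determinantal expressions defining $\epsilon^{i_1\dots i_m}$ recalled in the statement, matching \eqref{Q0ijN}--\eqref{Qijkp} term by term. The main technical obstacle will be the sign bookkeeping required to reach the lexicographic basis and to identify which antisymmetrisations survive in each of the four cases; once this is handled, the identification of each contraction with an $\epsilon$-tensor is mechanical.
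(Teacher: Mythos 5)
Your overall strategy is exactly the one the paper uses: express the parallel tractor through a basis element $e_{abcd}$ in the exponential trivialization as the terminating series $e_{abcd}-\rho_4(J)(e_{abcd})+\tfrac12\rho_4^2(J)(e_{abcd})$, using that $\rho_4(J)$ acts as a derivation via \eqref{action2}, and then pair with $T_4$ from \eqref{4tractor}. Your case analysis of which orders survive for each of the four basis types ($e_{ijkN}$ constant; only first order for $e_{0ijN}$ and $e_{ijkl}$; second order needed for $e_{0ijk}$) agrees with the paper's computation.

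However, there is a concrete error in your treatment of the second-order term for $e_{0ijk}$. Every term of $\rho_4(J)^2(e_{0ijk})$ is of the form $e_{mnpN}$ (no $e_0$ survives two applications), so ``the unique contribution in which no $e_0$ remains'' does not single anything out; and the composition $e_0\mapsto X^me_m\mapsto -X^mX^me_N$ is \emph{not} the only contribution. There are also cross terms, e.g.\ $e_0\mapsto X^me_m$ followed by $e_i\mapsto -X^ie_N$, and $e_i\mapsto -X^ie_N$ followed by $e_0\mapsto X^me_m$, producing coefficients $X^mX^i$ on basis elements $e_{mjkN}$ etc. These cross terms pair with the $e_{0mnp}$-component of $T_4$ and are precisely what generates the last term $u^{-4}\ep^{ijkl}(X,U,A,A')X_l$ of \eqref{Q0ijk} (cf.\ the determinant identity relating $\ep^{ijkl}(X,U,A,A')X_l$ to $|X|^2\ep^{ijk}(U,A,A')$ plus $\langle X,\cdot\rangle$-terms, quoted in the proof of Theorem \ref{prop_relation}). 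Executed as written, your sketch would return only the $\tfrac12|X|^2u^{-4}\ep^{ijk}(U,A,A')$ piece and hence an incorrect $Q^{0ijk}$. A smaller slip: the components of $P_{abcd}$ that matter are not those in the spans of $e_{0mnp}$ and $e_{0mnN}$ (indeed $P_{ijkN}=e_{ijkN}$ lies in neither), but those in the metric duals of the subspaces supporting $T_4$, namely $e_{mnpN}$ and $e_{0mnN}$. Both points are repairable within your approach, but as stated the $Q^{0ijk}$ computation would fail.
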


\begin{proof}
The standard action \eqref{action2} differs for $e_0$, $e_i$ and $e_N$. Thus it is reasonable to organize elements of the lexicographic basis of $\wedge^4 \R^{n+2}$ into the following four types
\begin{enumerate}
\item
 $\frac12 n(n-1)$ elements of the form $e_{0ijN}$, 
\item 
$\frac16 n(n-1)(n-2)$ elements of the form   $e_{0ijk}$, 
\item
 $\frac16 n(n-1)(n-2)$ elements of the form  
$e_{ijkN}$, 
\item
 $\frac{1}{24}n(n-1)(n-2)(n-3)$ elements of the form  
$e_{ijkl}$,
\end{enumerate}
where $i,j,k,l=1,\dots, n$.
For each basis element, i.e. for each coordinate, we compute the expression \eqref{exp} for the action $\rho_4: \frak{so}(n+1,1) \to \frak{gl}(\wedge^{4}\R^{n+2})$ given by tensoriality of the standard action.
 Thus we compute
 \begin{gather} \label{expw}
 w-\rho_4(J)(w)+\frac12  \rho_4^2(J)(w)
 \end{gather}
 for coordinates $w$,
 and we always choose one of the coordinates equal to $1$ and the rest zero.  
Direct computation using the formula \eqref{action2} and tensoriality give the following actions $\rho_4(J)$, where sums are generically for $l=1, \dots, n$ and we emphasize other restrictions below the sum
\begin{align*}
e_{0ijN}& \mapsto \sum_{l<i} X^le_{lijN} - 
 \sum_{i<l<j} X^le_{iljN} + \sum_{l>j} X^l e_{ijlN}
\\
e_{0ijk}&\mapsto \sum_{l<i} X^l e_{lijk} - \sum_{i<l<j} X^l e_{iljk} + 
 \sum_{j<l<k} X^l e_{ijlk}  
- \sum_{l<k} X^l e_{ijkl} \\ &-  X^i e_{0jkN}+ 
 X^j e_{0ikN} -X^k e_{0ijN}
\\
 e_{ijkN}&\mapsto 0
 \\
e_{ijkl}&\mapsto X^i e_{jklN} -X^j e_{iklN} + 
X^k  e_{ijlN} - X^l e_{ijkN}.
 \end{align*}
 Having these, we compute the second powers aka compositions $\rho_4^2(J)$ as
 \begin{align*}
 e_{0ijN}&\mapsto 0\\
e_{0ijk}& \mapsto \sum_{k=1}^n\big( (X^l)^2-2((X^i)^2+(X^j)^2+(X^k)^2) \big)e_{ijkN} \\& - 
2 X^i(\sum_{l<j,\: l\neq i}X^le_{ljkN}+\sum_{j<l<k}X^le_{jlkN}+\sum_{k<l}X^le_{jklN}   ) \\ &+
2X^j( \sum_{j<i} X^l e_{likN}-\sum_{i<l<k,\: l \neq j}X^l e_{ilkN}+\sum_{l>k} X^l e_{iklN}   )\\ &-
2X^k(\sum_{l<i}X^l e_{lijN}-\sum_{i<l<j}X^l e_{iljN}+\sum_{l>j,\: l\neq k} x_l e_{ijlN})
\\ 
e_{ijkN}&\mapsto 0
\\
e_{ijkl}&\mapsto 0.
  \end{align*}
Then it is a direct computation to sort these terms and put them together according to \eqref{expw} for each basis element. The pairing of the resulting expressions (w.r.t. the tractor metric) with the $4$-tractor $T_4$ described explicitly in \eqref{4tractor} then gives the mentioned quantities. 
\end{proof}
The quantities for arbitrary curve satisfying $\Delta
_5=0, \Delta_4 \neq 0$ are given by the same formulae multiplied by $(-\Delta_4)^{-\frac12}$.

\begin{exam}{\it{Quantities in dimension $3$.}}
We have $5$ basis elements $e_0=e_{0123},  e_1=e_{0124},  e_2=e_{0134}, e_3=e_{0234},  e_4=e_{1234}$ and denote corresponding coordinates $w^0, \dots, w^4$. 
The standard action \eqref{action2} and tensoriality then give
\begin{gather*}
e_0 \mapsto  -X^1e_3+X^2e_2-X^3e_1,\ \ \  e_1 \mapsto X^3e_4, \ \ \   e_2 \mapsto -X^2e_4, \ \ \   e_3 \mapsto X^1  e_4, \ \ \   e_4 \mapsto 0.
\end{gather*}
In the coordinates $w^a$, we can write the corresponding transformation matrix $\rho_4(J)$ and thus $\exp(-\rho_4(J))$ as
$$
\rho_4(J)=\left( \begin{smallmatrix} 
0&0&0&0&0 
\\ -X^3&0&0&0&0 
\\X^2&0&0&0&0 
\\-X^1&0&0&0&0 
\\0&X^3&-X^2&X^1&0 
\end{smallmatrix} \right),
\ \
\exp(-\rho_4(J))=\left( \begin{smallmatrix} 
1&0&0&0&0 
\\ X^3&1&0&0&0 
\\-X^2&0&1&0&0 
\\X^1&0&0&1&0 
\\-\frac12\big( (X^1)^2+(X^2)^2+(X^3)^2\big)&-X^3&X^2&-X^1&1 
\end{smallmatrix} \right).
$$
Thus we describe elements of $\wedge^4\R^5$ in exponential coordinates $w^a$ as
$\exp(-\rho_4(J))(w)$, and we get coordinates
\begin{gather*}
[\  w^0,\ \  w^0X^3+w^1,\ \  -w^0X^2+w^2,\ \  w^0X^1+w^3, 
\\
-\frac12 \big( (X^1)^{2}+(X^2)^{2}+(X^3)^{2} \big) 
{ w^0}-{ w^3}{ X^1}+{ w^2}{ X^2}-{ w^1}{ X^3}+{ w^4} \  ]
\end{gather*}
Now pairing these with $T_4$ with respect to the tractor metric gives conserved quantities. We get a generic family of quantities so that for each $a=0, \dots, 4$, we choose $w^a=1$ and remaining coordinates zero.
 
We give the resulting quantities as follows.  Below, the wedge product of three vectors is the scale given by the determinant of the corresponding matrix, and the wedge product of two vectors is the cross product.
Then we get three quantities of type (1)
\begin{gather*}
Q^{0124}=u^{-4}X^3 \, U \wedge A \wedge A'-u^{-2} \: U \wedge A'+3u^{-4}\: \langle U,A \rangle \: U \wedge A \\
Q^{0134}=u^{-4}X^2 \, U \wedge A \wedge A'-u^{-2} \: U \wedge A'+3u^{-4}\: \langle U,A \rangle \: U \wedge A \\
Q^{0234}=u^{-4}X^1 \, U \wedge A \wedge A'-u^{-2} \: U \wedge A'+3u^{-4}\:\langle U,A \rangle \: U \wedge A
\end{gather*}
and we have one quantity of type (2)
\begin{gather*}
Q^{0123}=3u^{4}\langle U,A \rangle \: X \wedge U \wedge A-u^{-2} X \wedge U \wedge A'+\frac12 | X |^2u^{-4}\: U \wedge A \wedge A'.
\end{gather*}
Let us note that in this dimension, the expression simplifies as the wedge product of four vectors vanishes. There is one quantity of type (3)
\begin{gather*}
Q^{1234}=-u^{-4}\: U \wedge A \wedge A'
\end{gather*}
and quantities of type (4) do not appear.
\end{exam}

\begin{exam} \label{exam_logspiral} {\it{Logarithmic spirals.}}
Let us consider a family of logarithmic spirals as follows
\begin{gather}
\label{logspiral} 
\wh X(t) = e^t  \cos(ct) P_0 +  e^t  \sin(ct) Q_0 + R_0,
\end{gather}
where $c$ is a constant, $P_0, Q_0, R_0$ are constant vectors, $\langle P_0, Q_0 \rangle =0$ and ${|P_0| = |Q_0|}.$
We compute quantities \eqref{Q0ijN}-\eqref{Qijkp} for these curves.
Direct computation gives 
\begin{align*}
U(t)&=e^t(\cos{(ct)}-\sin{(ct)})P_0+e^t(\cos{(ct)}+\sin{(ct)})Q_0,\\
A(t)&=e^t((1-c^2)\cos{(ct)}-2c\sin{ct})P_0+e^t((1-c^2)\sin{(ct)}+2c\cos{(ct)}Q_0,\\
A'(t)&=e^t((c^3-3c)\sin{(ct)}+(-3c^2+1)\cos{(ct)})P_0\\
&-e^t((c^3-3c)\cos{(ct)}+(3c^2-1)\sin{(ct)})Q_0.
\end{align*}
Then we compute
\begin{gather*}
   u^2=e^{2t}(c^2+1)|P_0|^2, \ \ \ \ \langle U,A \rangle = e^{2t}(c^2+1)|P_0|^2, \ \ \ \ \langle U,A' \rangle = e^{2t}(c^4-1)|P_0|^2
\end{gather*}
thanks to the facts $|P_0|=|Q_0|$ and $\langle P_0, Q_0 \rangle=0$.
Moreover, the expression $\epsilon^{ijk}(P_0,Q_0,R_0)$ (up to the order of vectors) is the only non-vanishing sub-determinant of third order; other possibilities contain at least two same vectors and vanish. Analogously, expressions of the fourth order always vanish. Altogether, we compute
\begin{gather*}
Q^{0ijN}=\frac{c}{|P_0|^2}\epsilon^{ij}(P_0,Q_0), \ \ \ \ Q^{0ijk}= \frac{c}{|P_0|^2}\epsilon^{ijk}(P_0,Q_0,R_0), \ \ \ \ Q^{ijkN}=Q^{ijkl}=0.
\end{gather*}
In particular, the quantities are constant along spirals.

Moreover, we can find explicitly that along the curves $\wh X(t)$, the tractor $\mathbb{A}$ takes form
\begin{gather*}
\mathbb{A}= \frac{e^{-t}}{|P_0|\sqrt{c^2+1}}e_0-\frac{\sqrt{c^2+1}(\cos{(ct)}P_0^i+\sin{(ct)}Q_0^i)}{|P_0|}e_i -e^{t}|P_0|\sqrt{c^2+1}e_N.
\end{gather*}
Then the square of the length of this tractor for the tractor metric is $\alpha_1=c^2-1$. Similarly, one computes the tractor $\mathbb{A'}$ and the square of its length $\alpha_2=c^4-c^2+1$. In particular, $$\Delta_4=\alpha_1^2-\alpha_2=-c^2.$$ It is a more nasty but analogous computation to find that $\Delta_5=0$.
\end{exam}

Let us note that one of the possible definitions of logarithmic spirals is via invariants, \cite{SZ19}. We say that a curve is a spiral if $\kappa_1$ is constant and $\Delta_5=0$, where 
\begin{gather*}
 \kappa_1=-\frac12 (-\Delta_4)^{-\frac52}(\alpha_1 \Delta_4^2-\frac12\Delta_4\Delta''_4+\frac{9}{16}(\Delta'_4)^2).
\end{gather*}
In fact, if we assume $\Delta_4$ constant, we also get $\alpha_1$ constant from the fact that $\kappa_1$ constant. The condition $\alpha_1$ constant gives a special parametrization which generalizes projective parametrization;  the projective parametrization corresponds to $\alpha_1=0$. 
In general, if $\Delta_4$ and $\alpha_1$ are constant, then $\kappa_1$ is constant, but the opposite direction does not necessarily hold in general. A different viewpoint on spirals was introduced in \cite{E23}, where the author focuses on homogeneous curves and studies their symmetry algebras.

\section{Conserved quantities for the conformal Mercator equation} \label{sec:firstintegralF}  
Here we come to our second generalization of conformal circles, that is, the class of parametrized curves which are solutions of the conformal Mercator equation, and present their conserved quantities.   
Note that the logarithmic spiral \eqref{logspiral} is also a solution \cite{DK21}.  Thus the generalization here and the one in Section \ref{sec:curvetractor} describe some common curves.
 

\subsection{Conformal Mercator equation in the conformally-flat case} \label{sec:confMeq}

Choosing the Euclidean metric $\de$ in the conformal class, the conformal Mercator equation \cite{DK21} is given by 
 \be \label{confMeqflat} \frac{dC}{dt} = 0, \; C = u^{-2} \left( A' - u^{-2}|A|^2 U -2 u^{-2} \langle A,U\rangle A + 4 u^{-4} \langle A,U \rangle^2 U - 2 u^{-2} \langle A',U \rangle U \right),\ee
 where  $U$ and $A$ are respectively the velocity and acceleration vector fields of a curve $\g(t),$ and 
  $A' = \frac{d A}{dt},$ as before.  

Equation \eqref{confMeqflat} is the Euler-Lagrange equation of the Lagrangian \cite{DK21}
\be \label{Lagflat}  L =  \frac{d}{dt} \left( u^{-2}\langle U,A \rangle \right) + \frac{1}{2} u^{-2}|A|^2 - u^{-4} \langle U,A \rangle^2,\ee
where the variation and its derivatives up to second order are assumed to vanish at end points.  This fact enables us to apply Noether's principle to find conserved quantities (or first integrals - functions that are constant on all solutions) of \eqref{confMeqflat}.


\subsection{Conserved quantities from conformal Killing vector fields}

Our derivation of conserved quantities
is based on a simplest version of Noether's theorem, which we shall describe below.  (See e.g. \cite{A89}.)  Suppose an ODE is the Euler-Lagrange equation of a functional
\[ I(\g) = \int_{t_0}^{t_1} L(\g(t)) \; dt,\]
meaning that all solutions of the ODE are critical curves of $I(\g).$  
A one-parameter group of symmetries of the Lagrangian $L$ is a one-parameter group of transformations  $\{\phi_s\}$  
that preserves $L$ in the sense that 
\be \label{Lagsym} L(\g(t)) = L( \phi_s(\g(t))),\ee
where $L(\g(t))$ denotes the Lagrangian evaluated on a curve $\g,$ while $L( \phi_s(\g(t)))$ denotes the one evaluated on the transformed curve $\phi_s(\g(t)).$
Noether's theorem states that there exists an associated conserved quantity for solutions of the Euler-Lagrange equation for each such symmetry group.\footnote{The general statement of Noether's theorem can be found in e.g. \cite{O93}, where the functional is a function on a set of $k$-dimensional submanifolds of a manifold $M,$  $1 \le k <  {\rm dim} \, M,$ generalizing a set of curves in $M,$ and leading to the Euler-Lagrange equation as a PDE.  In addition, there exists a conserved quantity for each symmetry group of the functional, which is more general than a symmetry group of the Lagrangian.} 
Note that if  $\{\phi_s\}$ is a symmetry group of the Lagrangian, then it is also a symmetry group of the Euler-Lagrange equation, which means that if $\g(t)$ is a solution of the ODE, then $\phi_s(\g(t))$ is also a solution.  

Based on Noether's principle, let us now derive an expression for the conserved quantities of solutions of the conformal Mercator equation \eqref{confMeqflat}.   Let $\g$ be a solution of \eqref{confMeqflat} and $\{\phi_s\}$ a one-parameter group of symmetries of the Lagrangian \eqref{Lagflat}.  Consider a one-parameter family of solution curves $\G: [s_0, s_1] \times [t_0, t_1] \to \R^n$ given by 
$\G(s,t)=\phi_s(\g(t)).$  Choose the parameter $s$ such that $0 \in [s_0, s_1]$ and  $\phi_0 = {\rm Id},$ i.e.  $\G(0,t) = \g(t).$  If we restrict the range of the parameter $t$ so that the generator of $\{\phi_s\}$ is nowhere tangent to $\g,$ then for a sufficiently small range of $s$ around $0,$ the family $\G$ is a $2$-dimensional submanifold of $\R^n.$

Now, let $U$ (where we abuse the notation) denote the vector field on $\G$ whose restriction to each solution curve $\phi_s(\g(t))$ ($s$ fixed) is the velocity of the curve.  Also, let $V$ be the generator of $\{\phi_s\}$ restricted to $\G.$  By definition, $U$ is invariant under the flow of $V,$ and thus $[U,V] = 0.$

Next, consider the Lagrangian function on $\G$ given by
\[ L(s,t) : = L( \phi_s(\g(t))) =  \nb_U \left( u^{-2} \langle U,A \rangle \right) + \frac{1}{2} u^{-2}|A|^2 - u^{-4} \langle U,A \rangle^2,\]
where $\nb$ is the Levi-Civita connection of $\de$ and $\nb_U$ the covariant derivative along each curve $\phi_s(\g(t)).$  
 Equation \eqref{Lagsym} implies that $V(L(s,t)) = 0.$ Then, by direct calculation with some rearrangements, it can be shown that 
\be \label{VL} 0 = V(L) = \nabla_U F + \langle \nabla_U C, V \rangle, \ee
where $F$ is a function on $\G$ given by
\[F = \nb_U \langle W, \nb_U V \rangle + \langle  \nb_U W, \nb_U V \rangle - \langle C,V \rangle, \quad \mbox{with} \quad W = u^{-2} U, \]
and the expression for $C$ is given by \eqref{confMeqflat}.
Let us note that in obtaining \eqref{VL}, we have used the flatness of $\de$ and the fact that $[U,V] = 0.$  These imply that $\nabla_V U = \nabla_U V$ and $\nabla_V \nabla_U U= \nabla_U \nabla_V U.$ 

Now restricting \eqref{VL} to each solution curve $\phi_s(\g(t))$ which satisfies \eqref{confMeqflat}, $\nabla_U C =0,$ we have  that $\nabla_U F = 0$ upon restriction.  Hence $F$ is constant on each solution.

The proposition below follows from the fact that 
 the Lagrangian \eqref{Lagflat} is invariant under a one-parameter group of conformal transformations.\footnote{One can verify directly that the Lagrangian \eqref{Lagflat} satisfies \eqref{Lagsym} for any one-parameter group of conformal transformations.}

\begin{prop} \label{prop_firstintegralF}
Let $\g: t \mapsto \g(t)$ be a smooth curve, and $U$ and $A$ be its velocity and acceleration vector fields, respectively. Let $V$ be a conformal Killing vector field 
restricted to $\g.$   Then the function 
\be \label{F} F = \frac{d}{dt} \langle W, V' \rangle + \langle W', V' \rangle  - \langle C,V \rangle,  \qquad W = u^{-2} U,  \ee
where $C$ is given in {\rm \eqref{confMeqflat}} and $\,{}'$ denotes $\frac{d}{dt},$ is constant on any curve $\g$ that satisfies the conformal Mercator equation {\rm \eqref{confMeqflat}.}
\end{prop}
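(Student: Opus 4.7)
The proof consists of identifying \eqref{F} as the Noether first integral associated with the one-parameter symmetry group of the Lagrangian \eqref{Lagflat} generated by $V$, once we know that such a symmetry group exists for every conformal Killing vector field $V$. Granted this, the identity \eqref{VL} derived earlier in the section gives $\nabla_U F = -\langle \nabla_U C, V\rangle$ on the $2$-dimensional family $\G(s,t) = \phi_s(\g(t))$, and the conformal Mercator equation $\nabla_U C = 0$ forces $\nabla_U F = 0$ on the $s=0$ slice, namely $\g$.

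The single nontrivial input is therefore the conformal invariance of \eqref{Lagflat} in the sense of \eqref{Lagsym}: for every local one-parameter group $\{\phi_s\}$ of conformal transformations of $(\R^n,\de)$, the Lagrangian evaluated on $\phi_s \circ \g$ equals that evaluated on $\g$. This is checked by a direct (if somewhat tedious) computation, using $\phi_s^{\,*}\de = e^{2\Om_s}\de$ together with $V = \frac{d}{ds}\phi_s|_{s=0}$, and tracking how the scalars $u^2$, $\langle U,A\rangle$, and $|A|^2$ transform; this is precisely the verification claimed in the footnote preceding the proposition.

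With conformal invariance in hand, I would form the family $\G(s,t) = \phi_s(\g(t))$, shrinking the $t$-range if necessary so that $V$ is transverse to $\g$, ensuring $\G$ is a $2$-submanifold with $[U,V]=0$. The identity \eqref{VL} then holds on $\G$; restricting to $s=0$, the flat Levi-Civita connection of $\de$ identifies $\nabla_U V|_\g$ with $V'$ and $\nabla_U W|_\g$ with $W'$, so the bracketed expression defining the Noether current becomes exactly the $F$ of \eqref{F}. On a solution of \eqref{confMeqflat}, $\nabla_U C = 0$ and we conclude $F'=0$, as required.

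The main obstacle is the conformal invariance of the Lagrangian, i.e.\ \eqref{Lagsym} for every one-parameter group of conformal transformations; the rest is essentially bookkeeping inside the already-established identity \eqref{VL}. A minor technical point concerns the transversality assumption on $V$: if $V$ is tangent to $\g$ at isolated instants the family $\G$ degenerates there, but $F$ is defined intrinsically by \eqref{F} along all of $\g$, so its constancy extends across such points by continuity. Alternatively one can bypass the $2$-family altogether and verify $F' = 0$ directly along $\g$ using only the conformal Killing equation for $V$ and the Mercator equation $C'=0$.
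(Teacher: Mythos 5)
Your proposal is correct and takes essentially the same approach as the paper: the paper's proof likewise rests on the Noether derivation via the identity \eqref{VL} (with conformal invariance of the Lagrangian \eqref{Lagflat} asserted as a direct verification), and, like you, notes the alternative of checking $\nb_U F=0$ directly from the conformal Killing and Mercator equations. The only difference of emphasis is that the paper chooses to outline that direct computation, which also sidesteps the transversality caveat you raise.
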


\begin{proof}  The proposition follows from the derivation based on Noether's principle as discussed above.  However, it can also be proved by directly calculating $\nb_U F = F'$ and show that it vanishes if the conformal Mercator equation and the conformal Killing equation hold.  Let us outline the calculation here briefly.

First recall that a conformal Killing vector field $V$ satisfies the  conformal Killing equation 
\be \label{confKeq} \nb_{(i} V_{j)} = \ll \, \de_{ij},  \qquad \ll = \frac{1}{n} \nb_i V^i,\ee
where we use the Einstein summation convention.   
This implies that upon restriction to a curve $\g,$ the vector field $V$ satisfies
\be \label{confKvfW}\langle \nb_U V, U \rangle = \ll \, u^2, \quad \mbox{i.e.} \quad \langle \nb_U V, W \rangle = \ll.\ee
Now rewriting $F$ as
\[F = \nb_U \langle W, \nb_U V \rangle -2 \langle W, A \rangle \langle W, \nb_U V \rangle + \langle u^{-2} A , \nb_U V \rangle- \langle C,V \rangle\]
gives
\begin{eqnarray*}\nb_U F  &=& \nb_U^2 \langle W, \nb_U V \rangle -2 \langle W, A \rangle \nb_U \langle W, \nb_U V \rangle  - u^{-2} |A|^2 \langle W, \nb_U V \rangle \\ 
&{}& + \langle u^{-2} A, \nb_U^2 V \rangle -  \langle \nb_U  C,V \rangle,
\end{eqnarray*}
where the last term vanishes by the conformal Mercator equation $\nb_U C=0.$

Using the conformal Killing equation \eqref{confKvfW}, we have that 
\be \label{nbF} \nb_U F  = \nb_U^2 \ll -2 \langle W, A \rangle \nb_U \ll  - u^{-2} |A|^2 \ll + \langle u^{-2} A, \nb_U^2 V \rangle. \ee
Then the last term of \eqref{nbF} can be computed using a differential consequence of the conformal Killing equation \eqref{confKeq} and the fact that $\nb$ is flat.  This results in 
\[\nb_U F  = \nb_U^2 \ll -   d \ll (A) = U^i U^j \nb_i \nb_j \ll.\]
Now, if $V$ is Killing, then $\ll$ is zero.  Also, we have that $\ll = \frac{1}{n} \nb_i V^i$ is constant for the generator of dilatation, and is linear in spacetime coordinates for those of special conformal transformations.  It follows that $\nb_U F =0$ for all conformal Killing vector fields.  
\end{proof}


\subsection{Expressions for conserved quantities and examples}

The conformal Killing vector fields 
can be divided into four types, namely the generators of translation, rotation, dilatation and special conformal transformation. Using the same notation as in Proposition \ref{prop_firstintegralF}, below we give an expression for the conserved quantity  \eqref{F}
for a conformal Killing vector field $V$ of each type.  Note that tautologically $C$ is a constant vector field along any solution curve of \eqref{confMeqflat}.
Also, the first term of \eqref{F} vanishes for all conformal Killing vector fields $V$ except for the generator of a special conformal transformation.  

\begin{prop} \label{prop_FTFRFDFS}
Let $(x^i)$ be the standard coordinates for $\R^n,$ and $\g$ be a solution to the conformal Mercator equation {\rm \eqref{confMeqflat}} given by $(x^i) = (\g^i(t)) =: (X^i(t)).$

Let $V_T,$ $V_R,$ $V_D$ and $V_S$ be the generators of translation, rotation, dilatation and special conformal transformation, respectively, given by
\[V_T  = T^i  \frac{\p}{\p x^i}, \quad V_R = {R_j}^i x^j  \frac{\p}{\p x^i}, \quad  V_D = a \, x^i  \frac{\p}{\p x^i}, \quad V_S =  \left(x_j x^j S^i - 2 S_j x^j x^i \right)  \frac{\p}{\p x^i}, \]
where $T^i, {R_i}^j, a, S^i$ are constant and arbitrary, with $R_{ij} = - R_{ji},$ and the indices are raised and lowered by $\de.$  Also, let $T:=T^i  \frac{\p}{\p x^i}$ and $S:=S^i  \frac{\p}{\p x^i}.$

Then the corresponding conserved quantities, $F$ {\rm \eqref{F}}, are given by
\begin{eqnarray}
F_T  &=& - \langle C,T \rangle  \label{FT} \\
F_R &=& R_{ij} \left(u^{-2} U^i A^j + C^i X^j \right)\label{FR} \\
F_D &=& -a  \left(u^{-2} \langle U,A \rangle + \langle C,X \rangle\right)\label{FD} \\
F_S &=&  2 \langle S, {\mc Y} \rangle \quad \mbox{with} \label{FS} \\
{\mc Y} &=& u^{-2} \langle U, X \rangle A - \left(1+ u^{-2} \langle A, X \rangle \right) U + \left(u^{-2} \langle U,A \rangle + \langle C,X \rangle \right) X - \frac{|X|^2}{2} C, \nonumber
\end{eqnarray}
where we use $X$ to denote the vector field $X^i  \frac{\p}{\p x^i}$ along $\g.$
\end{prop}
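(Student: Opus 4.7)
The plan is to prove each of the four identities by substituting the corresponding conformal Killing vector field $V$ into the master formula
\[F = \frac{d}{dt}\langle W, V'\rangle + \langle W', V'\rangle - \langle C, V\rangle\]
from Proposition \ref{prop_firstintegralF}, evaluating the derivatives along $\gamma$ explicitly, and collecting terms. Throughout, I will use two elementary identities: $\langle W, U\rangle = u^{-2}|U|^2 = 1$ and $W' = -2 u^{-4}\langle U, A\rangle\, U + u^{-2} A$, the latter obtained by differentiating $u^2 = \langle U,U\rangle$ (so that $u' = u^{-1}\langle U,A\rangle$).

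First I would dispose of the easy cases. For $V_T = T$ (constant in $t$ along $\gamma$), we have $V_T' \equiv 0$, so the first two terms of $F$ vanish and only $-\langle C, T\rangle$ survives, yielding $F_T$. For $V_R$, the restriction to $\gamma$ gives $(V_R)^i = {R_j}^i X^j$ and $(V_R')^i = {R_j}^i U^j$; antisymmetry of $R_{ij}$ kills $\langle W, V_R'\rangle$ identically, so the first term of $F$ is zero. The same antisymmetry eliminates the $U$-part of $W'$ from $\langle W', V_R'\rangle$, leaving $u^{-2} R_{ij} A^i U^j$; together with $-\langle C, V_R\rangle = -R_{ij} C^i X^j$ this assembles into $F_R$ after using $R_{ij} = -R_{ji}$ to absorb signs.

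For dilatation, $V_D = aX$ along $\gamma$, so $V_D' = aU$ and $\langle W, V_D'\rangle = a u^{-2}|U|^2 = a$ is constant, killing the first term of $F$. The second term becomes $a\langle W', U\rangle = a\bigl(\tfrac{d}{dt}\langle W,U\rangle - \langle W, A\rangle\bigr) = -a u^{-2}\langle U, A\rangle$, and $-\langle C, V_D\rangle = -a\langle C, X\rangle$; summing gives $F_D$.

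The main obstacle will be $V_S$, which is quadratic in the position vector and whose derivatives along $\gamma$ generate many cross-terms. One first computes
\[V_S' = 2\langle U, X\rangle\, S - 2\langle S, U\rangle\, X - 2\langle S, X\rangle\, U.\]
A short calculation using $\langle W, U\rangle = 1$ and $\langle W, X\rangle = u^{-2}\langle U, X\rangle$ shows that two of the three terms in $\langle W, V_S'\rangle$ cancel, leaving $\langle W, V_S'\rangle = -2\langle S, X\rangle$; differentiating contributes $-2\langle S, U\rangle$ to $F_S$. The expansion of $\langle W', V_S'\rangle$ produces six bilinears; the two pairs involving $\langle U, S\rangle\langle U, X\rangle$ cancel against each other, leaving
\[2u^{-2}\langle S, X\rangle\langle U, A\rangle + 2u^{-2}\langle U, X\rangle\langle A, S\rangle - 2u^{-2}\langle S, U\rangle\langle A, X\rangle.\]
Finally $-\langle C, V_S\rangle = -|X|^2\langle C, S\rangle + 2\langle S, X\rangle\langle C, X\rangle$ supplies the two $C$-dependent contributions. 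Collecting everything and reading off the common factor $2\langle S, \cdot\rangle$ yields $F_S = 2\langle S, \mc{Y}\rangle$ with $\mc{Y}$ as stated. The bookkeeping of cross-terms in the $V_S$ calculation is the only delicate part; the rest is essentially forced by antisymmetries and the normalization $\langle W, U\rangle = 1$.
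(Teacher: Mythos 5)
Your proposal is correct and follows exactly the paper's approach: the paper's proof is simply "direct substitution of $V_T, V_R, V_D, V_S$ into \eqref{F}," and your detailed computations (including the cancellations from $\langle W,U\rangle=1$, the antisymmetry of $R_{ij}$, and the expansion of $\langle W', V_S'\rangle$) check out and reproduce \eqref{FT}--\eqref{FS}.
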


\begin{proof}
The proposition follows by direct substitution of $V_T,$ $V_R,$ $V_D,$ $V_S$ into \eqref{F}, where upon restriction to $\g$ we have $V_R = {R_j}^i X^j  \frac{\p}{\p x^i},$ $V_D = a \, X$ and $V_S =  |X|^2 S - 2 \langle S,X \rangle X.$
\end{proof}

Note that for translations, choosing $T$ to be each coordinate vector field $T = \frac{\p}{\p x^i},$  the quantity $F_T = -C^i$ is just minus each component of the tautologically constant vector field $C.$ 

\begin{exam}  
It was shown in \cite{DK21} that the logarithmic spiral \eqref{logspiral},
 \[ \wh X(t) = e^t  \cos(ct) P_0 +  e^t  \sin(ct) Q_0 + R_0,\]
is the general solution of the reduction of \eqref{confMeqflat} under the condition $C=0.$  For this solution, the conserved quantities \eqref{FT} - \eqref{FS} are given by
\[F_T =0, \quad F_R = \frac{c}{|P_0|^2}R_{kl}P_0^kQ_0^l, \quad F_D = -a, \quad F_S = 2 \langle S, -c \frac{\langle Q_0, R_0 \rangle}{|P_0|^2}P_0 + c \frac{\langle P_0, R_0 \rangle}{|P_0|^2}Q_0+ R_0 \rangle.\]

Let us note the similarity between the quantity $Q^{0ijN}=\frac{c}{|P_0|^2}\epsilon^{ij}(P_0,Q_0)$ of Example \ref{exam_logspiral} and $F_R$ here.  In fact, for a fixed $i <j,$ choosing the anti-symmetric matrix $R$ to be such that $R_{ij} =1,$ $R_{ji}=-1$ and all other entries vanish, one has $F_R = Q^{0ijN}.$  We will come back to this example again in Section \ref{sec:Hamiltonian} after we establish the relation between the conserved quantities of Proposition \ref{prop_conservedqtractor} and Proposition \ref{prop_FTFRFDFS}.
\end{exam}

\begin{exam}
Recall that under the stereographic projection, the inverse image of logarithmic spirals on a plane are loxodromes on $S^2,$ which are curves that cut all meridians at a constant angle.   Let us consider the logarithmic spiral \eqref{logspiral} in two dimensions. 

Without loss of generality, one can choose $P_0 = (1,0),$ $Q_0 = (0,1)$ and $R_0 = (0,0).$  Then the inverse image of the curve is a loxodrome which cuts all meridians at the angle $\cos^{-1}(1/\sqrt{1+c^2}).$  The conserved quantity which corresponds to this constant angle is $F_R$;  choosing $R = \left( \begin{smallmatrix} 0&1 \\ -1 &0 \end{smallmatrix} \right),$ we have that $F_R = c.$ 
\end{exam}

\begin{exam}
Due to conformal invariance of the conformal Mercator equation \eqref{confMeqflat}, the special conformal transformation of \eqref{logspiral},
\be \label{523} X(t) = \frac{\wh X(t) - |\wh X(t)|^2B}{1-2\langle \wh X(t), B \rangle+|B|^2 |\wh X(t)|^2},\ee
where $B$ is a constant vector, is also a solution of \eqref{confMeqflat} \cite{DK21}.

Below we give the values of the conserved quantities \eqref{FT} - \eqref{FS}, evaluated on \eqref{523}:
\begin{eqnarray*}
F_T  &=& - \langle C,T \rangle, \qquad \mbox{where} \\
C &=& \frac{2}{|P_0|^2}\Big(c\langle Q_0, |B|^2 R_0 - B \rangle P_0 - c \langle P_0, |B|^2 R_0 - B \rangle Q_0 - |B|^2 |P_0|^2 R_0 \\
&{}& \qquad + \big(2 c \langle P_0, R_0 \rangle\langle Q_0, B \rangle - 2 c \langle Q_0, R_0 \rangle\langle P_0, B \rangle + (2\langle R_0, B \rangle-1)|P_0|^2 \big)\,B  \Big) \\
F_R &=& \frac{c}{|P_0|^2} [P_0, Q_0]_R - 2 [B, R_0]_R + \frac{2c}{|P_0|^2}\Big([Q_0, B]_R\langle P_0, R_0 \rangle - [P_0, B]_R\langle Q_0, R_0 \rangle \Big),\\
&{}& \mbox{where} \quad  [P_0, Q_0]_R  := R_{ij}P_0^iQ_0^j, \; \mbox{etc.} \\
F_D &=& - a \left( 1 - 2\langle R_0, B \rangle +\frac{2c}{|P_0|^2}\big(\langle P_0, B \rangle\langle Q_0, R_0 \rangle - \langle Q_0, B \rangle\langle P_0, R_0 \rangle\big) \right) \\
F_S &=&  2 \langle S, {\mc Y} \rangle, \quad \mbox{where} \quad
{\mc Y} = -c \frac{\langle Q_0, R_0 \rangle}{|P_0|^2}P_0 + c \frac{\langle P_0, R_0 \rangle}{|P_0|^2}Q_0+ R_0.
\end{eqnarray*}
\end{exam}
 
 Let us end this section by noting that, as conformal circles are also solutions of the conformal Mercator equation \eqref{confMeqflat}, the function $F$ \eqref{F} also gives a conserved quantity for conformal circles for each conformal Killing vector field $V.$  We summarize this result in Appendix \ref{app_circleV}.


\section{Hamiltonian formulation and relation between conserved quantities} \label{sec:Hamiltonian}

In this section we write our conserved quantities in terms of the Hamiltonian phase space variables associated to the conformal Mercator equation. The advantage of this formulation is that the expressions for the quantities are all simplified to polynomials.  This helps us to notice and write down relations between them.  In particular, we manage to write down the conserved quantities of Proposition \ref{prop_conservedqtractor} in terms of the basis quantities for \eqref{FT} - \eqref{FS}.

\subsection{Hamiltonian formalism}

Here we briefly review the Hamiltonian formulation of the conformal Mercator equation \eqref{confMeqflat} 
given in \cite{DK21}.  Using the Ostrogradsky construction to the second order derivative Lagrangian \cite{BE90}
\[L_1 =  \frac{1}{2} u^{-2} |A|^2 - u^{-4}\langle U,A \rangle^2,\]
which is the Lagrangian $L$ \eqref{Lagflat} without the first term\footnote{Recall that the conformal Mercator equation \eqref{confMeqflat} is the Euler-Lagrange equation of both Lagrangains $L$ and $L_1,$ as the first term of $L$ \eqref{Lagflat} is integrated to a boundary term.  In Section \ref{sec:firstintegralF}, we use $L$ as we require conformal invariance.  Here we simply describe a Hamiltonian system which is equivalent to \eqref{confMeqflat}.}, the corresponding Hamiltonian phase space is of $4n$ dimensions, with $X^i$ and $U^i,$
$i = 1, \dots, n,$ as the first $2n$ canonical coordinates.   
The other $2n$ coordinates are given by the momenta $\mP = (\mP_i)$ and $\mR = (\mR_i)$ conjugate to $X$ and $U,$ respectively, which are defined by
\[\mP_i = \frac{\p L_1}{\p U^i} - \frac{d}{dt}\frac{\p L_1}{\p A^i}  \qquad \mbox{and} \qquad
\mR_i = \frac{\p L_1}{\p A^i}.\]
It turns out that 
\begin{align}
\begin{split}
\mP &= - u^{-2} \left( A' - u^{-2}|A|^2 U -2 u^{-2} \langle A,U\rangle A + 4 u^{-4} \langle A,U \rangle^2 U - 2 u^{-2} \langle A',U \rangle U \right) \\
&= -C, \label{Pdefine} 
\end{split}\\
\mR &=u^{-2} A - 2 u^{-4} \langle U,A \rangle U. \label{Rdefine}
\end{align}
The Hamiltonian is then given by the Legendre transform ${H = \langle \mP, X' \rangle + \langle \mR, U' \rangle - L_1,}$ which gives
\be \label{H} H  \; = \; \langle \mP, U \rangle  - \langle \mR, U \rangle^2 + \frac{1}{2} u^{2} |\mR|^2.\ee
It follows that the conformal Mercator equation \eqref{confMeqflat} is equivalent to the $4n$ first order Hamilton's equations
\[{X^i}' = \frac{\p H}{\p \mP_i}, \qquad  {U^i}' = \frac{\p H}{\p \mR_i}, \qquad  {\mP_i}' = - \frac{\p H}{\p X^i}, \qquad   {\mR_i}' = - \frac{\p H}{\p U^i},\]
that is, \cite{DK21}
\be \label{Hamilsys} X' = U, \quad U' = u^2 {\mc R} - 2\langle U,\mc R \rangle U, \quad {\mc P}' = 0, \quad {\mc R}' = -|\mR|^2 U + 2 \langle  U, \mR \rangle \mR - \mP. \ee

Let us recall (see e.g. \cite{A89}) that given 
 a function $f(X, U, {\mc P}, {\mc R})$ on the Hamiltonian phase space, the derivative of $f$ along a solution curve of \eqref{Hamilsys} is given by
 \[\frac{df}{dt} = \{f, H\}, \]
 where for any two functions $f, g$ on the phase space, the Poisson bracket $\{ \,, \, \}$  is defined by
\[\{f, g\} = \frac{\p f}{\p X^i} \frac{\p g}{\p \mP_i} + \frac{\p f}{\p U^i} \frac{\p g}{\p \mR_i} - \frac{\p g}{\p X^i} \frac{\p f}{\p \mP_i} - \frac{\p g}{\p U^i} \frac{\p f}{\p \mR_i},\]
using the summation convention.  

It then follows that a function $f(X, U, {\mc P}, {\mc R})$ is constant along a solution curve if and only if 
$\{f, H\} = 0.$


\subsection{Conserved quantities in phase-space variables}

It turns out that the conserved quantities for solutions of the conformal Mercator equation, \eqref{FT} - \eqref{FS}, are all polynomials in the phase space variables.  Treating $X, U, {\mc P}, {\mc R}$ as $n$-vectors, the quantities are given by
\begin{eqnarray*}
F_T &=& \langle T, \mP \rangle, \\
F_R &=& R_{ij} (X^i \mP^j + U^i \mR^j),\\
F_D &=& a \, ( \langle X, \mP \rangle + \langle U, \mR\rangle ), \\
F_S &=& \langle S, |X|^2 \mP + 2 \langle X, U \rangle \mR -2(\langle X, \mP \rangle +\langle U, \mR \rangle) X - 2(1+\langle X,\mR \rangle) U \rangle,
\end{eqnarray*}
where in obtaining these expressions we have used the conformal Mercator equation in Hamilton's form \eqref{Hamilsys}, and also the relations \eqref{Pdefine}, \eqref{Rdefine}, which imply 
\begin{eqnarray*} 
A &=& -2 \langle \mR, U \rangle U + u^2 \mR, \\
A' &=& \left(2 \langle U, \mP \rangle + 4 \langle U, \mR \rangle^2 - u^2|\mR|^2\right) U - 2 u^2 \langle U, \mR \rangle \mR  - u^2 \mP.
\end{eqnarray*}

Among the above quantities, we find that there are precisely $(n+1)(n+2)/2$ functionally independent conserved quantities, corresponding to the basis vector fields of the Lie algebra of the conformal Killing vector fields. 
Moreover, the Poisson bracket relations of these basis quantities follow the Lie bracket relations of the associated basis vector fields in an analogous way.

In writing down the relation with the conserved quantities of Proposition \ref{prop_conservedqtractor}, we shall use the following $(n+1)(n+2)/2$ basis quantities, which correspond to a choice of basis conformal Killing vector fields: 
\begin{eqnarray}
E_T^i &=& \mP^i \label{F_T}\\
E_{R}^{ij} &=&  \ep^{ij}(X, \mP) + \ep^{ij}(U, \mR) \qquad  \quad \label{F_R}\\
E_{D} &=& \langle X, \mP \rangle + \langle U, \mR\rangle \label{F_D}\\
E_S^i &=& |X|^2 \mP^i + 2 \langle X, U\rangle \mR^i -2( \langle X, \mP \rangle + \langle U, \mR\rangle ) X^i - 2(1+ \langle X,\mR \rangle) U^i, \qquad \quad  \label{F_S}
\end{eqnarray}
where $i,j=1, \dots, n,$
and $\ep^{ij}$ is the elementary alternating tensor of rank $2,$ defined in Section \ref{sectract}. 

Similarly,
the conserved quantities \eqref{Q0ijN} - \eqref{Qijkp} of Proposition \ref{prop_conservedqtractor} can be written as 
\begin{align}
 Q^{0ijN} &= -\langle \mR, U \rangle \ep^{ij}(U,\mR) + \ep^{ij}(U,\mP) -    \ep^{ijk}(U,\mR, \mP) X_k \label{Q1}\\
 \begin{split}
Q^{0ijk} &= - \Big( \, \langle U,\mR \rangle \ep^{ijk}(X,U,\mR) +  \ep^{ijk}(U, X,\mP)
\\ &  \qquad \quad + \frac{1}{2}|X|^2 \ep^{ijk}(U,\mR, \mP) + \ep^{ijkl}(X,U,\mR, \mP) X_l \, \Big) \quad \qquad \label{Q2} 
\end{split}
\\
Q^{ijkN} &= - \ep^{ijk}(U,\mR,\mP) \label{Q3} \\
Q^{ijkl} &= - \ep^{ijkl}(X, U,\mR,\mP). \label{Q4}
\end{align}

\subsection{Relation between conserved quantities}

In the following theorem, we write the conserved quantities of Proposition \ref{prop_conservedqtractor}, \eqref{Q1} - \eqref{Q4}, 
in terms of the quantities \eqref{F_T} - \eqref{F_S}.  
For convenience, let $E_{T} := (\, E_T^i \, )$  and 
$E_{S} := (\, E_S^i \, )$  denote the $n$-vectors whose components are $E_T^i $ and $E_S^i ,$ respectively.  

\begin{theo} \label{prop_relation}
With the notations defined above, the following identities hold.
\begin{eqnarray}
Q^{0ijN} &=& \frac{1}{2} \ep^{ij}(E_{T}, E_{S}) - E_{D} \, E_{R}^{ij} \label{Q1F} \\
Q^{0ijk} &=&  \frac{3}{2} E_{R}^{[ij}  \, E_{S}^{k]} \label{Q2F} \\
Q^{ijkN} &=& - 3 \, E_{R}^{[ij}  \, E_{T}^{k]} \label{Q3F} \\
Q^{ijkl} &=& \frac{6}{E_{D}} \, E_{R}^{[ij}  \, E_{S}^{k} \, E_{T}^{l]}. \label{Q4F}
\end{eqnarray}
\end{theo}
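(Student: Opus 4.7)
The plan is to verify the four identities by a direct component computation, after recognizing that the basic quantities admit a clean description in exterior algebra. Specifically, the antisymmetry of $E_R^{ij}$ shows it is the component of the $2$-form $X \wedge \mP + U \wedge \mR$, while $E_T = \mP$ is a $1$-form and
\[ E_S = |X|^2\mP + 2\langle X, U\rangle \mR - 2 E_D\, X - 2(1+\langle X,\mR\rangle) U \]
is a linear combination of the four vectors $X, U, \mR, \mP$. Thus each antisymmetrised product on the right of \eqref{Q1F}--\eqref{Q4F} is, up to a combinatorial factor, the component of an exterior product of these quantities, and many of the summands that arise after expanding are forced to vanish by repeated-factor identities such as $\mP \wedge \mP = 0$, so the computations collapse substantially.

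I would treat the four identities in the order \eqref{Q3F}, \eqref{Q4F}, \eqref{Q1F}, \eqref{Q2F}. For \eqref{Q3F}, the product $E_R \wedge \mP = (X \wedge \mP + U \wedge \mR) \wedge \mP$ immediately reduces to $U \wedge \mR \wedge \mP$, and the normalisation $3 E_R^{[ij} E_T^{k]} = (E_R \wedge E_T)^{ijk}$ yields $-3 E_R^{[ij} E_T^{k]} = -\ep^{ijk}(U,\mR,\mP) = Q^{ijkN}$. For \eqref{Q4F}, expanding the $4$-form $E_R \wedge E_S \wedge \mP$ term by term, the repeated-factor rule kills every summand except one, leaving the clean identity
\[ E_R \wedge E_S \wedge E_T = -2 E_D \, X \wedge U \wedge \mR \wedge \mP. \]
Dividing by $E_D$ and using $(E_R \wedge E_S \wedge E_T)^{ijkl} = 12\, E_R^{[ij} E_S^k E_T^{l]}$ gives \eqref{Q4F}, with the factor $E_D$ cancelling against the $1/E_D$ on the right.

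For \eqref{Q1F} and \eqref{Q2F} the same exterior-algebra reduction applies to the right-hand sides, producing linear combinations of basic elementary alternating tensors such as $\ep^{ij}(U,\mR)$, $\ep^{ij}(U,\mP)$, $\ep^{ij}(\mR,\mP)$ (resp.\ their rank-$3$ analogues), each multiplied by a scalar coefficient built from $E_D$, $\langle X, U \rangle$, $|X|^2$, $1 + \langle X, \mR \rangle$, and the like. On the left, the contracted terms $\ep^{ijk}(U,\mR,\mP)\, X_k$ in $Q^{0ijN}$ and $\ep^{ijkl}(X, U, \mR, \mP)\, X_l$ in $Q^{0ijk}$ are re-expressed by Laplace cofactor expansion along the row containing $X$, producing exactly the same set of elementary alternating tensors but now with scalar coefficients of the form $\langle X, \cdot \rangle$. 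Matching the coefficients of each basic tensor on both sides establishes the identities. The main obstacle is purely combinatorial: one has to keep careful track of the normalisation constants coming from the relation $(\alpha \wedge \beta)^{i_1\ldots i_{p+q}} = \binom{p+q}{p}\,\alpha^{[i_1\ldots i_p}\beta^{i_{p+1}\ldots i_{p+q}]}$ and of the signs produced when reordering wedge products into a canonical order, which together generate the fractional coefficients $\tfrac{1}{2}$, $\tfrac{3}{2}$, and $\tfrac{6}{E_D}$ in the statement.
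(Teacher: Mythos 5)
Your proposal is correct and is essentially the paper's own argument: the authors likewise prove the theorem by direct calculation, using the normalisation identity $\ep^{ijk}(U,\mR,\mP)=3\,\ep^{[ij}(U,\mR)\,\mP^{k]}$ and the Laplace-type expansion of $\ep^{ijkl}(X,U,\mR,\mP)X_l$ into rank-$3$ tensors with coefficients $\langle X,\cdot\rangle$ — exactly the two mechanisms you describe. Your exterior-algebra packaging (with the repeated-factor cancellations such as $E_R\wedge E_S\wedge E_T=-2E_D\,X\wedge U\wedge\mR\wedge\mP$) is just a tidier way of organising the same computation, and your normalisation constants check out.
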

\begin{proof}
The proof is by direct calculation, where we use identities from the property of the determinant such as $\ep^{ijk}(U, \mR, \mP) = 3 \, \ep^{[ij}(U, \mR) \, \mP^{k]}.$
Note also that $Q^{0ijk}$ \eqref{Q2} can be written as
\begin{eqnarray*}
Q^{0ijk}   &=& \frac{1}{2}|X|^2 \ep^{ijk}(U, \mR, \mP)  + \langle X,U\rangle \ep^{ijk}(X, \mP,\mR) - (1+\langle X, \mR\rangle) \ep^{ijk}(X, \mP, U) \\
&{}& - (\langle X, \mP\rangle + \langle U, \mR\rangle ) \ep^{ijk}(U, \mR, X),
\end{eqnarray*}	 
using the equality 
\begin{eqnarray*}
\ep^{ijkl}(X,U,\mR, \mP) X_l  &=& - |X|^2 \ep^{ijk}(U, \mR, \mP)  + \langle X,U\rangle \ep^{ijk}(X, \mR, \mP) - \langle X, \mR\rangle \ep^{ijk}(X, U, \mP) \\
&{}& + \langle X, \mP\rangle \ep^{ijk}(X, U, \mR),
\end{eqnarray*}	
which also follows from the property of the determinant.
\end{proof}
Note that although being proved here in the Hamiltonian formulation, the identities of 
Theorem \ref{prop_relation} hold regardless of whether the quantities are expressed in the phase space or `spacetime' variables.

\begin{exam}
Here we come back to the logarithmic spiral \eqref{logspiral},
 \[ \wh X(t) = e^t  \cos(ct) P_0 +  e^t  \sin(ct) Q_0 + R_0.\]
 It can be shown that 
 \[E_T^i = 0, E_R^{ij} = \frac{c}{|P_0|^2}\ep^{ij}(P_0,Q_0), \quad E_D = -1, \quad E_S^i = -2c \frac{\langle Q_0, R_0 \rangle}{|P_0|^2}P_0^i + 2c \frac{\langle P_0, R_0 \rangle}{|P_0|^2}Q_0^i+ 2R_0^i. \]
 
Then one can use Theorem \ref{prop_relation} to compute the conserved quantities of Proposition \ref{prop_conservedqtractor}.
Since $E_T^i = 0,$ we have that $Q^{ijkN} =Q^{ijkl} =0.$  Then
\[Q^{0ijN} =- E_{D} \, E_{R}^{ij} = \frac{c}{|P_0|^2}\ep^{ij}(P_0,Q_0),\]
and
\[Q^{0ijk} =  \frac{3}{2} E_{R}^{[ij}  \, E_{S}^{k]} = 3 \frac{c}{|P_0|^2}\ep^{[ij}(P_0,Q_0) \, R_0^{k]} = \frac{c}{|P_0|^2}\ep^{ijk}(P_0,Q_0, R_0).\]
Cf. Example \ref{exam_logspiral}.
\end{exam}

\begin{exam}  
In $3$ dimensions, there are $3$ quantities for $Q^{0ij4};$ let $Q_1$ denote a $3$-vector given by $Q_1 = (Q^{0234}, - Q^{0134}, Q^{0124}).$  There is only one quantity for each of $Q^{0ijk}$ and $Q^{ijk4}$, let $Q_2:=Q^{0123}$ and $Q_3:=Q^{1234}.$  Also recall that $Q^{ijkl}$ does not appear in $3$ dimensions.

The relations \eqref{Q1F} - \eqref{Q3F} simplify, as there are also $3$ quantities for $E_{R}^{ij},$ and thus one can form a $3$-vector $E_{R} := (E_R^{23}, - E_R^{13}, E_R^{12}).$   Then $Q_1,$ $Q_2$ and $Q_3$ can be given simply in terms of the dot products $\langle \, , \, \rangle$ and cross products $\w$ of the $3$-vectors  $E_{T},$ $E_{R},$ $E_{S},$ and a scalar multiple with $E_{D}$ as
\begin{eqnarray*}
Q_{1} &=&  \frac{1}{2} E_T \w E_S - E_D E_R,  \\
Q_{2} &=& \frac{1}{2}\langle E_R, E_S \rangle,\\
Q_3 &=& - \langle E_T, E_R \rangle.
\end{eqnarray*}

In addition, the Hamiltonian \eqref{H},
which is also a conserved quantity can be written as 
\[H = \frac{1}{2}\left( |E_R|^2 - \langle E_T, E_S \rangle - {(E_D)}^2 \right).\]

As an aside, let us note that to confirm Liouville integrability of the conformal Mercator equation \eqref{Hamilsys} in $3$ dimensions, one needs to show that there exists a set of $6$ involutive independent conserved quantities.  Recall that two functions $f, g$ on a Hamiltonian phase space is said to be involutive if $\{f, g\}=0.$  However, so far we are unable to generate such a set from the $16$ conserved quantities ($E_{T},$ $E_{R},$ $E_{D},$ $E_{S},$ $Q_1,$ $Q_2,$ $Q_3,$ $H$).  The largest involutive independent set we have found consists of $5$ conserved quantities, that is $\{E_{T}^i, Q_3, H\},$ $i=1,2,3.$
\end{exam}


\section*{Acknowledgements}
The authors would like to thank Maciej Dunajski for introducing them to the problem.
They are also grateful to Maciej Dunajski, Jan Gregorovi\v c, Rod Gover,  Wojciech Kry\'nski, Josef \v Silhan and Vojt\v ech \v Z\' adn\' ik for helpful discussions. 
The authors would also like to thank the Isaac Newton Institute for Mathematical Sciences, Cambridge, for support and hospitality during the programme Twistor theory, where some research on this paper was carried over. This work
was supported by EPSRC grant EP/Z000580/1.
L.Z. is supported by the grant GACR 24-10887S Cartan supergeometries and Higher Cartan geometries.

\appendix
\section{Conserved quantities for conformal circles from parallel $3$-tractor} \label{A1}
Let us give here for completeness the quantities for conformal circles coming from pairing of the $3$-tractor $\mathbb{T} \wedge \mathbb{U} \wedge \mathbb{A}$ and parallel tractors corresponding to CKY 2-forms aka elements of $\wedge^3 \R^{n+2}$. 
Computations analogous to Section \ref{sectract} give that the $3$-tractor takes the form
$$
T_3= u^{-3} U^{[i}A^{j]} \: e_{0ij}-u^{-1} U^i\: e_{0iN}
$$
where we use Einstein summation convention,  $[ab..c]$ denotes the anti-symmetrization over indices in the bracket and $i,j=1,\dots, n$.
One can see that this corresponds to the results presented in [Section 4.4.3.]\cite{GST21}. 
Using notation analogous to Section \ref{sectract}, we get the quantities as follows:
 \begin{align*}
Q^{0iN} &= 
u^{-1}U^i+u^{-3}  \epsilon^{ik}(U,A) X_k
 \\
Q^{0ij}& =
 -u^{-1}\epsilon^{ij}(X,U)+\frac{1}{2} u^{-3} |X|^2 \ep^{ij}(U, A) - u^{-3} \ep^{ijk}(X,U,A) X_k
\\
 Q^{0ij}&=u^{-3} \epsilon^{ij}(U,A)
\\
 Q^{ijk}&=u^{-3}\epsilon^{ijk}(X,U,A). 
\end{align*}


\section{Conserved quantities for conformal circles from Lagrangian formulation} \label{app_circleV} 
It was shown in \cite{DK21} that conformal circles correspond to the class of solutions of the conformal Mercator equation \eqref{confMeqflat} satisfying the condition
\[C= \left(\frac{1}{2} u^{-4}|A|^2 - 2 u^{-6} \langle A,U \rangle^2\right)U + u^{-4} \langle A,U \rangle A.\]
Then \eqref{confMeqflat} reduces to the conformal circle equation
\be \label{confgeqflat} A' -3 u^{-2} \langle A,U \rangle A - \frac{3}{2}u^{-2}|A|^2 U= 0, \ee
the general solution of which is a projectively parametrized circle
\[X(t) = X_0 + \frac{t U_0 + t^2 A_0}{1+t^2 |A_0|^2},\]
where $X_0, U_0, A_0$ are constant vectors, $|U_0|=1$ and $\langle U_0, A_0 \rangle =0.$
  
It turns out that for conformal circles, the conserved quantity of Proposition \ref{prop_firstintegralF} takes a simple form.  To see this, first note that the function $F$ \eqref{F} can be written as
\[ F = \frac{d^2}{d t^2}\langle W, V \rangle - \langle Z, V \rangle, \quad \mbox{where} \quad
Z =  \frac{1}{2} u^{-2} A' + \frac{3}{2} W'' - \langle W, A' \rangle W.\]
Now, by virtue of \eqref{confgeqflat}, it can be shown that the vector field $Z$ vanishes on any conformal circle.  This gives 
\[ F = \frac{d^2}{d t^2}\langle W, V \rangle.\]
Recall that for all conformal Killing vector fields $V$ except for the generator of a special conformal transformation, we have that $\langle W, V' \rangle$ is constant, and thus $\dsl F = \frac{d}{dt} \langle W', V \rangle.$

\section{Note on tractor equation}  \label{app_tractoreq}

The general principle states that the middle aka invariant slot of the tractor equation 
\begin{gather*}
\mathbb{A}''=\frac{\Delta'_4}{2\Delta_4}\mathbb{A}'-\alpha_1\mathbb{A}+\frac{1}{2\Delta_4}(\alpha'_1(2\alpha_2-\Delta_4)-\alpha_1\alpha'_2)\mathbb{U}+\\
\frac{1}{4\Delta_4}(2\alpha_1(\alpha'_1)^2-4\Delta_4^2-2\alpha''_2\Delta_4-\alpha'_1\alpha'_2)\mathbb{T}
\end{gather*}
 gives the equation for the corresponding curves satisfying $\Delta_5=0$. This can be done in a general setting but will be nasty. Involving the conditions $\Delta'_4=0$ and $\alpha'_1=0$, the middle slot gives an equation of the form 
\begin{align} \begin{split} \label{eq}
&u^{-1}\big( 6u^{-4}\langle U,A \rangle\langle A,A \rangle -4u^{-2} \langle A,A'\rangle \big)U-\\
&u^{-1}\big( -12u^{-4}\langle U, A \rangle^2+4u^{-2}\langle U, A \rangle  +3u^{-2}\langle A,A \rangle \big) A-\\
&4u^{-1}\langle U,A\rangle A'+u^{-1} A''.
\end{split}
\end{align}
Note that
\begin{align*}
\alpha_1&=-6u^{-4}\langle U,A \rangle^2+2u^{-2}\langle U, A' \rangle +3u^{-2}\langle A,A \rangle,\\
\Delta_4&=9u^{-8}\langle U,A \rangle^4-6u^{-6}\langle U,A \rangle^2\langle U,A'\rangle-9u^{-6}\langle U,A \rangle^2\langle A,A\rangle \\ 
&+6u^{-4}\langle U,A\rangle\langle A,A'\rangle +u^{-4}\langle U, A' \rangle^2-u^{-2}\langle A',A'\rangle.
\end{align*}
The equation  \eqref{confMeqflat} then gives that the following expression vanishes
\begin{gather*}
-24u^{-8}\langle U,A\rangle^3 U+16u^{-6}\langle U, A \rangle \langle U,A'\rangle U +
12 u^{-6}\langle U, A \rangle \langle A, A \rangle U+\\
 12 u^{-6}\langle U,A \rangle^2 A-2u^{-4} \langle U, A''\rangle U-4u^{-4}\langle A,A'\rangle U -\\
4u^{-4}\langle U,A'\rangle A-3u^{-4}\langle A,A\rangle A -4u^{-4}\langle U, A \rangle A'+u^{-2}A''.
\end{gather*}
It turns out that involving the restriction $\alpha'_1=0$ on this expression results in the minus $u^{-1}$-multiple of \eqref{eq}. This agrees with the fact that the tractor equation lives in the space $TM[-1]$ while the flat Mercator equation lives in the space $TM[-2]$. Let us note that this works only in the case of the conformal sphere. In the general curved case, the Mercator equation differs from the equation coming from tractor calculations by some curvature term.



\end{document}